\numberwithin{equation}{section}
\newtheorem{thrm}{Theorem}[section]
\newtheorem{lemma}[thrm]{Lemma}
\newtheorem{prop}[thrm]{Proposition}
\newtheorem{cor}[thrm]{Corollary}
\begin{document}

\begin{abstract}
New smooth solutions of the Strominger system with non vanishing flux, non-trivial instanton
and non-constant dilaton based on the quaternionic Heisenberg group are constructed.
We show that through appropriate contractions
the solutions found in the $G_2$-heterotic case
converge to the heterotic solutions on 6-dimensional inner non-K\"ahler
spaces previously found by the authors
and, moreover, to new heterotic solutions with non-constant dilaton in dimension 5.
All the solutions satisfy the heterotic
equations of motion up to the first order of $\alpha^{\prime}$.
\end{abstract}

\title[Heterotic String Solutions with non-constant dilaton in dimensions 7
and 5]{The quaternionic Heisenberg group and Heterotic String Solutions with
non-constant dilaton in dimensions 7 and 5}
\date{\today}
\author{Marisa Fern\'andez}
\address[Fern\'andez]{Universidad del Pa\'{\i}s Vasco\\
Facultad de Ciencia y Tecnolog\'{\i}a, Departamento de Matem\'aticas\\
Apartado 644, 48080 Bilbao\\
Spain}
\email{marisa.fernandez@ehu.es}
\author{Stefan Ivanov}
\address[Ivanov]{University of Sofia "St. Kl. Ohridski"\\
Faculty of Mathematics and Informatics\\
Blvd. James Bourchier 5\\
1164 Sofia, Bulgaria}
\address{and Institute of Mathematics and Informatics, Bulgarian Academy of
Sciences}
\email{ivanovsp@fmi.uni-sofia.bg}
\author{Luis Ugarte}
\address[Ugarte]{Departamento de Matem\'aticas\,-\,I.U.M.A.\\
Universidad de Zaragoza\\
Campus Plaza San Francisco\\
50009 Zaragoza, Spain}
\email{ugarte@unizar.es}
\author{Dimiter Vassilev}
\address[Dimiter Vassilev]{ Department of Mathematics and Statistics\\
University of New Mexico\\
Albuquerque, New Mexico, 87131-0001}
\email{vassilev@math.unm.edu}
\date{\today }
\maketitle
\tableofcontents

\setcounter{tocdepth}{2}

\section{Introduction}

We investigate smooth solutions with non-trivial fluxes to the heterotic
equations of motion preserving at least one supersymmetry up to the first
order of the string tension $\alpha^{\prime }$ in dimensions seven and five.
Using the quaternionic Heisenberg group we propose an explicit construction
leading to new smooth solutions of the Killing spinor equations and the
Green-Schwarz anomaly cancellation, the system of equations known as the
Strominger system, with a non-constant dilaton.
The found solutions satisfy the heterotic
equations of motion up to the first order of $\alpha^{\prime }$.

Another goal of the paper is to point that through contractions of the
quaternion Heisenberg algebra, the geometric structures, the partial
differential equations and their solutions found in the $G_2$-heterotic case
converge to the heterotic solutions on 6-dimensional inner non-K\"ahler
spaces found in \cite{FIUVas} and to the new 5-dimensional heterotic solutions with non-constant dilaton.

The bosonic fields of the ten-dimensional supergravity which arises as low
energy effective theory of the heterotic string are the spacetime metric $g$%
, the NS three-form field strength (flux) $H$, the dilaton $\phi$ and the
gauge connection $A$ with curvature 2-form $F^A$. The bosonic geometry is of
the form $\mathbb{R}^{1,9-d}\times M^d$, where the bosonic fields are
non-trivial only on $M^d$, $d\leq 8$. We consider the two connections $
\nabla^{\pm}=\nabla^g \pm \frac12 H, $ 
where $\nabla^g$ is the Levi-Civita connection of the Riemannian metric~$g$.
Both connections preserve the metric, $\nabla^{\pm}g=0$ and have totally
skew-symmetric torsion $\pm H$, respectively. We denote by $R^g,R^{\pm}$ the
corresponding curvature.

We consider the heterotic supergravity theory with an $\alpha^{\prime }$
expansion where $1/2\pi\alpha^{\prime }$ is the heterotic string tension.
The bosonic part of the ten-dimensional supergravity action in the string
frame is (\cite{HT}, \cite{Berg}, $R=R^-$)
\begin{gather}  \label{action}
S=\frac{1}{2k^2}\int d^{10}x\sqrt{-g}e^{-2\phi}\Big[Scal^g+4(\nabla^g%
\phi)^2- \frac{1}{2}|H|^2 -\frac{\alpha^{\prime }}4\Big(Tr |F^A|^2)-Tr |R|^2%
\Big)\Big].
\end{gather}
The string frame field equations (the equations of motion induced from the
action \eqref{action}) of the heterotic string up to the first order of $%
\alpha^{\prime }$ in sigma model perturbation theory in the notations in
\cite{GPap} are \cite{Hu86,HT}

\begin{equation}  \label{mot}
\begin{aligned}
&Ric^g_{ij}-\frac14H_{imn}H_j^{mn}+2\nabla^g_i\nabla^g_j\phi-\frac{\alpha^{%
\prime }}4 \Big[(F^A)_{imab}(F^A)_j^{mab}-R_{imnq}R_j^{mnq}\Big]=0, \\
&\nabla^g_i(e^{-2\phi}H^i_{jk})=0,\qquad \nabla^+_i(e^{-2\phi}(F^A)^i_j)=0.
\end{aligned}
\end{equation}
The field equation of the dilaton $\phi$ is implied from the first two
equations above.

The Green-Schwarz anomaly cancellation mechanism requires that the
three-form Bianchi identity receives an $\alpha^{\prime }$ correction of the
form
\begin{equation}  \label{acgen}
dH=\frac{\alpha^{\prime }}48\pi^2(p_1(M^d)-p_1(E))=\frac{\alpha^{\prime }}4 %
\Big(Tr(R\wedge R)-Tr(F^A\wedge F^A)\Big),
\end{equation}
where $p_1(M^d)$ and $p_1(E)$ are the first Pontrjagin forms of $M^d$ with
respect to a connection $\nabla$ with curvature $R$ and the vector bundle $E$
with connection $A$, respectively.

A class of heterotic-string backgrounds for which the Bianchi identity of
the three-form $H$ receives a correction of type \eqref{acgen} are those
with (2,0) world-volume supersymmetry. Such models were considered in \cite%
{HuW}. The target-space geometry of (2,0)-supersymmetric sigma models has
been extensively investigated in \cite{HuW,Str,HP1}. Recently, there is
revived interest in these models \cite%
{Bwit,GKMW,CCDLMZ,IP1,IP2,GMPW,GMW,GPap} as string backgrounds and in
connection with heterotic-string compactification with fluxes mainly in
dimension six \cite%
{Car1,BBDG,BBE,BBDP,y1,FLY,y3,y4,P,GPR,BBCG,GPRS,GLP,Pap,BSethi,AG1,AG2,AG3,GKP,Bis,Sethi,Ossa,Sharp,MSeth,FYau,Ossa1}%
.

Equations \eqref{acgen}, \eqref{action} and \eqref{mot} involve a subtlety
due to the choice of the connection $\nabla$ on $TM^d$ since anomalies can
be canceled independently of the choice \cite{Hull}. Different connections
correspond to different regularization schemes in the two-dimensional
worldsheet non-linear sigma model. Hence the background fields given for the
particular choice of $\nabla$ must be related to those for a different
choice by a field redefinition \cite{Sen}. Connections on $M^d$ proposed to
investigate the anomaly cancellation \eqref{acgen} are $\nabla^g$ \cite%
{Str,GMW}, $\nabla^+$ \cite{CCDLMZ,DFG,FIUV}, $\nabla^-$ \cite%
{Hull,Berg,Car1,GPap,II,KY,KM,MS,MY,IMY}, Chern connection $\nabla^c$ when $%
d=6$ \cite{Str,y1,FLY,y3,y4}.

A heterotic geometry preserves supersymmetry iff in ten
dimensions there exists at least one Majorana-Weyl spinor $\epsilon$ such
that the following Killing-spinor equations hold \cite{Str,Berg}
\begin{equation} \label{sup1}
\begin{aligned}
&\delta_{\lambda}=\nabla_m\epsilon = \left(\nabla_m^g +\frac{1}{4}%
H_{mnp}\Gamma^{np} \right)\epsilon=\nabla^+\epsilon=0, \\
& \delta_{\Psi}=\left(\Gamma^m\partial_m\phi -\frac{1}{12}H_{mnp}\Gamma^{mnp}
\right)\epsilon=(d\phi-\frac12H)\cdot\epsilon=0, \\
& \delta_{\xi}=F^A_{mn}\Gamma^{mn}\epsilon=F^A\cdot\epsilon=0,
\end{aligned}
\end{equation}
where $\lambda, \Psi, \xi$ are the gravitino, the dilatino and the gaugino
fields,  $\Gamma_i$ generate the Clifford algebra $%
\{\Gamma_i,\Gamma_j\}=2g_{ij}$ and $\cdot$ means Clifford action of forms on
spinors.

The system of Killing spinor equations \eqref{sup1} together with the
anomaly cancellation condition \eqref{acgen} is known as the \emph{%
Strominger system} \cite{Str}. The last equation in \eqref{sup1} is the
instanton condition which means that the curvature $F^A$ is contained in a
Lie algebra of a Lie group which is a stabilizer of a non-trivial spinor. In
dimension 7 this group is $G_2$. Denoting the $G_2$ three-form by $\Theta$,
the $G_2$-instanton condition has the form
\begin{equation}  \label{in2}
\sum_{k,l=1}^7(F^A)^i_j(E_k,E_l)\Theta(E_k,E_l,E_m)=0.
\end{equation}

In the presence of a curvature term $Tr(R\wedge R)$ the solutions of the
Strominger system \eqref{sup1}, \eqref{acgen} obey the second and the third
equations of motion (the second and the third equations in \eqref{mot}) but
do not always satisfy the Einstein equations of motion (see \cite%
{FIUV,FIUVdim5,FIUVdim7-8} where a sufficient quadratic condition on $R$ is
found). It was proved in \cite{Iv0} that the solutions of the Strominger
system (\eqref{sup1} and \eqref{acgen}) also solve the heterotic
supersymmetric equations of motion \eqref{mot} if and only if $R$ is an
instanton in dimensions 5,6,7,8 (see \cite{MS,Ossa} for higher dimensions
and different proofs). In particular, in dimension 7, $R$ is required to be
an $G_2$-instanton.

The physically relevant connection on the tangent bundle to be considered in %
\eqref{acgen}, \eqref{action}, \eqref{mot} is the $(-)$-connection \cite%
{Berg,Hull}. One reason is that the curvature $R^-$ of the $(-)$-connection
is an instanton up to the first order of $\alpha^{\prime }$ which is a
consequence of the first equation in \eqref{sup1}, \eqref{acgen} and the
well known identity
\begin{equation}  \label{dtr}
R^+(X,Y,Z,U)-R^-(Z,U,X,Y)=\frac12dH(X,Y,Z,U).
\end{equation}
Indeed, \eqref{acgen} together with \eqref{dtr} imply $%
R^+(X,Y,Z,U)-R^-(Z,U,X,Y)=O(\alpha^{\prime })$ and the first equation in %
\eqref{sup1} yields that the holonomy group of $\nabla^+$ is contained in $%
G_2$, i.e. the curvature 2-form $R^+(X,Y)\subset \mathfrak{g}_2$ and
therefore $R^-$ satisfies the instanton condition \eqref{in2} up to the
first order of $\alpha^{\prime }$. Hence, a solution to the Strominger
system with first Pontrjagin form of the $(-)$-connection always satisfies
the heterotic equations of motion \eqref{mot} up to the first order of $%
\alpha^{\prime }$ (see e.g.\cite{MS} and references therein).

In dimension 7 the only known heterotic/type I solutions with non-zero
fluxes to the equations of motion preserving at least one supersymmetry
(satisfying \eqref{sup1} and \eqref{acgen} without the curvature term, $R=0$%
) are those constructed \cite{GNic}. All these solutions are noncompact and
conformal to a flat space. Noncompact solutions to \eqref{sup1} and %
\eqref{acgen} in dimension 7 are presented also in \cite{II}. The first
compact heterotic/type I solutions with non-zero fluxes and constant dilaton
to the equations of motion preserving at least one supersymmetry (satisfying %
\eqref{sup1} and \eqref{acgen}) in dimension seven are constructed in \cite%
{FIUVdim7-8}

In dimension $d=5$, if the field strength vanishes, $H=0$, then the
5-dimensional case reduces to dimension four since any five dimensional
Riemannian spin manifold admitting $\nabla^g$-parallel spinor is reducible.
Non compact solutions on circle bundle over 4-dimensional base endowed with
a hyper K\"ahler metric (when the 4-dimensional metric is Eguchi-Hanson,
Taub-NUT, Atiyah-Hitchin) have appeared in \cite{LV-P,GGMPR,SM,BBW,Pap}, the
compact cases are discussed in \cite{GMW} where a cohomological obstruction
is presented. The first compact heterotic/type I solutions with non-zero
fluxes and constant dilaton to the equations of motion preserving at least
one supersymmetry (satisfying \eqref{sup1} and \eqref{acgen}) in dimension
five are constructed in \cite{FIUVdim5}.

In this paper we construct smooth solutions with non vanishing flux and
non-constant dilaton to the Strominger system using the first Pontrjagin
form of the \emph{$(-)$-connection} on 7-dimensional complete non-compact
manifold equipped with conformally cocalibrated $G_2$ structures of pure
type coupled with carefully chosen instanton bundle. The source of the
construction is the already constructed smooth compact solutions to the
Strominger system with constant dilaton on nilmanifods presented in \cite%
{FIUVdim7-8} and the ideas outlined there to consider special three-torus
bundles over either conformally $\mathbb{T}^4$ or K3 manifold.

Our first family of solutions are complete $G_2$ manifolds which are $%
\mathbb{T}^3$ bundles over conformally compact asymptotically hyperbolic
metric on $\mathbb{T}^4$ with conformal boundary at infinity a flat torus $%
\mathbb{T}^3$. Using the first Pontrjagin form of the $(-)$-connection
together with the first Pontrjagin form of a carefully chosen instanton we
satisfied the anomaly cancellation condition with a \emph{negative} $%
\alpha^{\prime }$ and a non-constant dilaton which a real slice of an
elliptic function of order two.

In Section \ref{s:non-compact ex} we present another smooth non-compact
complete solution to the Strominger system using the first Pontrjagin form
of the $(-)$-connection with positive string tension on certain $\mathbb{T}%
^3 $ bundles over $\mathbb{R}^4$ with non-vanishing torsion, non-trivial
instanton and non-constant dilaton. The non-constant dilaton function here is determined by the fundamental
solution of the Laplacian on $\mathbb{R}^4$.

\medskip

\noindent\textbf{Conventions.} The connection 1-forms $\omega_{ji}$ of a metric
connection $\nabla, \nabla g=0$ with respect to a local orthonormal basis $%
\{E_1,\ldots,E_d\}$ are given by $\omega_{ji}(E_k) = g(\nabla_{E_k}E_j,E_i)$%
, since we write $\nabla_X E_j = \omega^s_j(X)\, E_s$.

The curvature 2-forms $\Omega^i_j$ of $\nabla$ are given in terms of the
connection 1-forms $\omega^i_j$ by $
\Omega^i_j = d \omega^i_j + \omega^i_k\wedge\omega^k_j, \quad \Omega_{ji} =
d \omega_{ji} + \omega_{ki}\wedge\omega_{jk}, \quad
R^l_{ijk}=\Omega^l_k(E_i,E_j), \quad R_{ijkl}=R^s_{ijk}g_{ls}$.

The first Pontrjagin class is represented by the 4-form $8\pi^2
p_1(\nabla)=\sum_{1\leq i<j\leq d} \Omega^i_j\wedge\Omega^i_j$.

\medskip

\noindent\textbf{Acknowledgments.} The work was partially supported through Project
MICINN (Spain) MTM2011-28326-C02-01/02, and Project of UPV/EHU ref.\
UFI11/52. S.I. is partially supported by Contract 168/2014 with the Sofia
University "St.Kl.Ohridski". D.V. was partially supported by the Simons Foundation grant \#279381.
S.I. and D.V. thank the University of Zaragoza and the University of the Basque Country for the hospitality
and financial support provided while visiting the respective Departments of Mathematics.

\section{The supersymmetry equations and the geometric model}

\label{geomod}

Geometrically, the vanishing of the gravitino variation is equivalent to the
existence of a non-trivial real spinor parallel with respect to the metric
connection $\nabla^+$ with totally skew-symmetric torsion $T=H$. The
presence of $\nabla^+$-parallel spinor leads to restriction of the holonomy
group $Hol(\nabla^+)$ of the torsion connection $\nabla^+$.

\subsection{Dimension 7}

\label{ss:dim7 constr}

In dimension seven $Hol(\nabla^+)$ has to be contained in the exceptional
group $G_2$ \cite{FI1,GKMW,GMW,FI2}. The precise conditions to have a
solution to the gravitino Killing spinor equation in dimension 7 were found
in \cite{FI1}. Namely, there exists a non-trivial parallel spinor with
respect to a $G_2$-connection with torsion 3-form $T$ if and only if there
exists an integrable $G_2$-structure $\Theta$, i.e. $d*\Theta=\theta^7\wedge
*\Theta$, where $\theta^7=-\frac{1}{3}*(* d\Theta\wedge\Theta) = \frac{1}{3}%
*(* d*\Theta\wedge*\Theta)$ is the Lee form. In this case, the connection $%
\nabla^+$ is unique and the torsion 3-form $T$ is given by the formula \cite%
{FI1}
\begin{equation*}
H=T=\frac{1}{6}(d\Theta,*\Theta)\,\Theta - *d\Theta +*(\theta^7\wedge\Theta).
\end{equation*}

The necessary conditions to have a solution to the system of dilatino and
gravitino Killing spinor equations (the first two equations in \eqref{sup1})
in dimension seven were derived in \cite{GKMW,FI1,FI2}, and the sufficiency
was proved in \cite{FI1,FI2}. The general existence result \cite{FI1,FI2}
states that there exists a non-trivial solution to both dilatino and
gravitino Killing spinor equations (the first two equations in \eqref{sup1})
in dimension 7 if and only if there exists a globally conformal co-calibrated $G_2$-structure $%
(\Theta,g)$ of pure type and with exact Lee form $\theta^7$, i.e. a $G_2$%
-structure $(\Theta,g)$ satisfying the equations
\begin{equation}  \label{sol7}
d*\Theta=\theta^7\wedge *\Theta, \quad d\Theta\wedge\Theta=0, \quad
\theta^7=-2d\phi.
\end{equation}
Consequently, the torsion 3-form (the flux $H$) is given by $
H=T= -* d\Theta - 2*(d\phi\wedge\Theta)$ and the Riemannian scalar curvature
satisfies $s^g=8||d\phi||^2 -\frac{1}{12}||T||^2 -6\, \delta d\phi$. The equations \eqref{sol7} hold exactly when the $G_2$-structure $%
(\bar\Theta=e^{-\frac32\phi}\Theta,\bar g=e^{-\phi}g)$ obeys the equations $d%
\bar{*}\bar\Theta=d\bar\Theta\wedge\bar\Theta=0,$ i.e., it is co-calibrated of
pure type.

A geometric model which fits the above structures  was proposed in \cite%
{FIUVdim7-8} as a certain ${\mathbb{T}}^3$-bundle over a Calabi-Yau surface.
For this, let $\Gamma_i$, $1\leq i \leq 3$, be three closed anti-self-dual $2$-forms
on a Calabi-Yau surface $M^4$, which represent integral cohomology classes.
Denote by $\omega_1$ and by $\omega_2+\sqrt{-1}\omega_3$ the (closed)
K\"ahler form and the holomorphic volume form on $M^4$, respectively. Then,
there is a compact 7-dimensional manifold $M^{1,1,1}$ which is the total
space of a ${\mathbb{T}}^3$-bundle over $M^4$ and has a $G_2$-structure
\begin{equation*}
\Theta=\omega_1\wedge\eta_1+\omega_2\wedge\eta_2-\omega_3\wedge\eta_3+\eta_1%
\wedge \eta_2\wedge\eta_3,
\end{equation*}
solving the first two Killing spinor equations in \eqref{sup1} with constant
dilaton in dimension $7$, where $\eta_i$, $1\leq i \leq 3$, is a $1$-form on

$M^{1,1,1}$ such that $d\eta_i=\Gamma_i$, $1\leq i \leq 3$.

For any smooth function $f$ on $M^4$, the $G_2$-structure on $M^{1,1,1}$
given by
\begin{equation*}
\Theta_f=e^{2f}\Big[\omega_1\wedge\eta_1+\omega_2\wedge\eta_2-
\omega_3\wedge\eta_3\Big]+\eta_1\wedge\eta_2\wedge\eta_3
\end{equation*}
solves the first two Killing spinor equations in \eqref{sup1} with
non-constant dilaton $\phi=-2f$. The metric has the form
\begin{equation*}
g_f=e^{2f}g_{cy}+\eta_1\otimes\eta_1+\eta_2\otimes\eta_2+
\eta_3\otimes\eta_3.
\end{equation*}
To achieve a smooth solution to the Strominger system we still have to
determine an auxiliary vector bundle with an instanton and a linear
connection on $M^{1,1,1}$ 
in order to satisfy the anomaly cancellation condition \eqref{acgen}.

\subsection{Dimension 5}

The existence of $\nabla^+$-parallel spinor in dimension 5 determines an
almost contact metric structure and, equivalently, a reduction of the
structure group $SO(5)$ to $SU(2)$. The properties of the almost contact
metric structure as well as solutions to gravitino and dilatino
Killing-spinor equations are investigated in \cite{FI1,FI2} and presented in
terms of reduction to $SU(2)$ in \cite{FIUVdim5}.

\subsubsection{Almost contact structure point of view}

We recall that an almost contact metric structure consists of an odd
dimensional manifold $M^{2k+1}$ equipped with a Riemannian metric $g$,
vector field $\xi$ of length one, its dual 1-form $\eta$ as well as an
endomorphism $\psi$ of the tangent bundle such that
\begin{equation*}
\psi(\xi)=0, \quad \psi^2=-id +\eta\otimes\xi, \quad
g(\psi.,\psi.)=g(.,.)-\eta\otimes\eta.
\end{equation*}
The Reeb vector field $\xi$ is determined by the equations $%
\eta(\xi)=1,\quad \xi\lrcorner d\eta=0$, where $\lrcorner$ denotes the
interior multiplication. The Nijenhuis tensor $N$, the fundamental form $F$
and the Lee form $\theta$ of an almost contact metric structure are defined
by
\begin{equation*}
N=[\psi.,\psi.]+\psi^2[.,.] -\psi [\psi.,.]-\psi[.,\psi.] +d\eta\otimes\xi,
\quad F(.,.)=g(.,\psi.), \quad \theta=\frac12F\lrcorner dF.
\end{equation*}
It was shown in \cite{FI2} that the gravitino and the dilatino equation
admit a solution in dimension five if and only if the Nijenhuis tensor is
totally skew-symmetric, the Reeb vector field $\xi$ is a Killing vector
field and the next equalities hold $2d\phi=\theta, \quad *_{\mathbb{H}%
}d\eta=-d\eta$, where $*_{\mathbb{H}}$ denotes the Hodge operator acting on
the 4-dimensional orthogonal complement $\mathbb{H}$ of the vector $\xi$, $%
\mathbb{H}={\rm Ker}\, \eta$.

\subsubsection{The SU(2)-structure point of view.}

The reduction of the structure group $SO(5)$ to $SU(2)$ is described in
terms of forms by Conti and Salamon in \cite{ConS} (see also \cite{GGMPR})
as follows: an $SU(2)$-structure on a 5-dimensional manifold $M$ is $%
(\eta,F=\omega_1,\omega_2,\omega_3)$, where $\eta$ is a $1$-form dual to $%
\xi $ via the metric and $\omega_s$, $s=1,2,3$, are $2$-forms on $M$ satisfying
$\omega_s\wedge \omega_t=\delta_{st}\, v, \quad v\wedge\eta\not=0$ for some $4$%
-form $v$, and $X\lrcorner \omega_1=Y\lrcorner \omega_2\Rightarrow
\omega_3(X,Y)\ge 0$. The 2-forms $\omega_s$, $s=1,2,3$, can be chosen to form a
basis of the $\mathbb{H}$-self-dual 2-forms \cite{ConS}.

It was shown in \cite{FIUVdim5} that the first two equations in \eqref{sup1}
admit a solution in dimension five exactly when there exists a five
dimensional manifold $M$ endowed with an $SU(2)$-structure
$(\eta,F=\omega_1,\omega_2,\omega_3$) satisfying the structure equations:
\begin{equation}  \label{solstr1}
d\omega_s=2df\wedge \omega_s, \qquad *_{\mathbb{H}}d\eta = - d\eta, \qquad
df(\xi)=0.
\end{equation}
The flux $H$ is given by \cite{FI1,FI2}
\begin{equation}  \label{tor5f}
H=T=\eta\wedge d\eta +2d^{\psi}f\wedge F, \quad \text{where} \quad
d^{\psi}f(X)=-df(\psi X).
\end{equation}
The dilaton $\phi$ is equal to $\phi=2f$.

In other words, the gravitino and dilatino equations in dimension five are
satisfied if and only if the manifold is special conformal to a
quasi-Sasaki manifold with $\mathbb{H}$-anti-self-dual exterior derivative
of the almost contact form and the metric has the form
\begin{equation*}
g_f=e^{2f}g_{|_{\mathbb{H}}}+\eta\otimes\eta.
\end{equation*}

It was proposed in \cite{FIUVdim7-8} to investigate $S^1$ bundles over a
conformally hyper-K\"ahler manifold. This ansatz guaranties solution to the
first two equations in \eqref{sup1}. To achieve a smooth solution to the
Strominger system we still have to determine  a linear connection on the tangent bundle and an auxiliary vector bundle with
an $SU(2)$-instanton, i.e., a connection $A$ with curvature 2-form $F^A$
satisfying
\begin{equation}  \label{25}
(F^A)^i_j(\psi E_k,\psi E_l)=(F^A)^i_j(E_k,E_l),\qquad
\sum_{k=1}^5(F^A)^i_j(E_k,\psi E_k)=0
\end{equation}
so that the
anomaly cancellation condition \eqref{acgen} is satisfied.

\section{The quaternionic Heisenberg group}

\label{calculations}

The seven dimensional quaternionic Heisenberg group $G(\mathbb{H})$ is the
connected simply connected Lie group with a group multiplication $[.,.]$
determined by the Lie algebra $\mathfrak{g(\mathbb{H})}$ with structure
equations
\begin{equation}  \label{ecus-qHg}
\begin{aligned} & d\gamma^1=d\gamma^2=d\gamma^3=d\gamma^4=0, \quad
d\gamma^5=\gamma^{12}-\gamma^{34},\quad
d\gamma^6=\gamma^{13}+\gamma^{24},\quad d\gamma^7=\gamma^{14}-\gamma^{23}.
\end{aligned}
\end{equation}

In order to obtain results in dimensions less than seven through
contractions of $\mathfrak{g(\mathbb{H})}$ it will be convenient to consider
the orbit of $G(\mathbb{H})$ under the natural action of $GL(3,\mathbb{R})$
on the $span\, \{\gamma^5, \gamma^6, \gamma^7\}$. Accordingly let $K_A$ be a
seven-dimensional real Lie group with Lie bracket $[x,x^{\prime
}]_A=A[A^{-1}x,A^{-1}x^{\prime }]$ for $A\in GL(3,\mathbb{R})$ defined by a
basis of left-invariant 1-forms $\{e^1,\ldots,e^7\}$ such that $e^i=\gamma^i$
for $1 \leq i \leq 4$ and $(e^5\ e^6\ e^7)=A\, (\gamma^5\ \gamma^6\
\gamma^7)^T$.
Hence, the structure equations of the Lie algebra $\mathfrak{K}_A$ of the
group $K_A$ are
\begin{equation}  \label{ecus-general}
d e^1=d e^2=d e^3=d e^4=0, \qquad d e^{4+i}= \sum_{j=1}^3a_{ij}\,\sigma_j,
\quad i=1,2,3,
\end{equation}
where $\sigma_1=e^{12}-e^{34}$, $\sigma_2=e^{13}+e^{24}$, $%
\sigma_3=e^{14}-e^{23}$ are the three anti-self-dual forms on $\mathbb{R}^4$
and
\begin{equation}  \label{matrixA}
A=\left(\!\!\!
\begin{array}{ccc}
a_{11} & a_{12} & a_{13} \\
a_{21} & a_{22} & a_{23} \\
a_{31} & a_{32} & a_{33}%
\end{array}
\!\right).
\end{equation}
We will denote the norm of $A$ by $|A|$, $|A|^2=\sum_{i,j=1}^3 a_{ij}^2$.

Since $\mathfrak{K}_A$ is isomorphic to $\mathfrak{g(\mathbb{H})}$, if $K_A$
is connected and simply connected it is isomorphic to $G(\mathbb{H})$.
Furthermore, any lattice $\Gamma_A$ gives rise to a (compact) nilmanifold $%
M_A=K_A/\Gamma_A$, which is a $\mathbb{T}^3$-bundle over a $\mathbb{T}^4$
with connection 1-forms of anti-self-dual curvature on the four torus.

Following \cite{FIUVdim7-8} we consider the $G_2$ structure on the Lie group
$K_A$ defined by the 3-form
\begin{equation}  \label{g2-general}
\Theta=\omega_1\wedge e^7+\omega_2\wedge e^5-\omega_3\wedge e^6 + e^{567},
\end{equation}
where
\begin{equation*}
\omega_1=e^{12}+e^{34},\quad \omega_2=e^{13}-e^{24},\quad
\omega_3=e^{14}+e^{23}
\end{equation*}
are the three closed self-dual 2-forms on $\mathbb{R}^4$. The corresponding
Hodge dual 4-form $*\Theta$ is given by
\begin{equation}  \label{g21-general}
*\Theta=\omega_1\wedge e^{56}+\omega_2\wedge e^{67}+\omega_3\wedge
e^{57}+\frac12\omega_1\wedge\omega_1.
\end{equation}
It is easy to check using \eqref{ecus-general} and the property $%
\sigma_i\wedge\omega_j=0$ for $1\leq i,j \leq 3$ that
\begin{equation}  \label{f1-general}
d*\Theta=0, \quad d\Theta\wedge\Theta=0,
\end{equation}
i.e. $\Theta$ is co-calibrated of pure type. According to \cite{FI1,FI2}
this $G_2$ structure solves the gravitino and dilatino equations with
constant dilaton.

Let $f$ be a smooth function on $\mathbb{R}^4$. Following \cite{FIUVdim7-8}
we consider the $G_2$ form given by
\begin{equation}  \label{g2f-general}
\bar\Theta=e^{2f}\Big[\omega_1\wedge e^7+\omega_2\wedge e^5-\omega_3\wedge
e^6\Big]+ e^{567}.
\end{equation}
The corresponding metric $\bar{g}$ on $K_{A}$ has an orthonormal basis of
1-forms given by
\begin{equation}  \label{conf-general}
\bar{e}^{1}=e^{f}\,e^{1},\quad \bar{e}^{2}=e^{f}\,e^{2},\quad \bar{e}%
^{3}=e^{f}\,e^{3},\quad \bar{e}^{4}=e^{f}\,e^{4},\quad \bar{e}%
^{5}=e^{5},\quad \bar{e}^{6}=e^{6},\quad \bar{e}^{7}=e^{7}
\end{equation}
and self-dual form $\bar\omega_i$ and anti-self-dual forms $\bar\sigma_i$
given by
\begin{equation}  \label{conf 2forms}
\bar\omega_i=e^{2f}\omega_i, \qquad \bar\sigma_i=e^{2f}\sigma_i, \quad
i=1,2,3.
\end{equation}
The corresponding Hodge dual 4-form $\bar *\bar\Theta$ is
\begin{equation}  \label{g21f-general}
\bar *\bar\Theta=e^{2f}\Big[\omega_1\wedge e^{56}+\omega_2\wedge e^{67}
+\omega_3\wedge e^{57}+\frac{e^{2f}}2\omega_1\wedge\omega_1\Big].
\end{equation}
It was shown in \cite[Theorem 6.1]{FIUVdim7-8} using \eqref{f1-general} that
\begin{equation}  \label{f1f-general}
d\bar *\bar\Theta=2df\wedge\bar *\bar\Theta, \quad
d\bar\Theta\wedge\bar\Theta=0.
\end{equation}
Then the Lie form $\bar\theta$ is given by
\begin{equation}  \label{lif-general}
\bar\theta= 2df
\end{equation}
and the $G_2$ structure $\bar\Theta$ solves the gravitino and dilatino
equations with non-constant dilaton $\phi=-2f$ \cite{FI1,FI2}.

According to \cite{FI1,FI2}, the torsion of the (+)-connection $\nabla^+$ is
the 3-form
\begin{equation}  \label{torg-general}
T=-*d\Theta + *(\theta\wedge\Theta).
\end{equation}
We calculate from \eqref{ecus-general} and \eqref{g2f-general} that
\begin{equation}  \label{dg2f-general}
d\bar\Theta=2df\wedge\bar\Theta-2df\wedge e^{567}+d e^{567}.
\end{equation}
A substitution of \eqref{dg2f-general} in \eqref{torg-general}, and using %
\eqref{lif-general}, gives
\begin{multline}  \label{torg1-general}
\bar T=\bar *(2df\wedge e^{567}-d e^{567}) =e^{-f}\Big[-2f_1\,\bar{e}%
^{234}+2f_2\,\bar{e}^{134}-2f_3\,\bar{e}^{124}+2f_4\,\bar{e}^{123}\Big] \\
+e^{-2f}\Big[(a_{11}\,\bar{\sigma_1}+a_{12}\,\bar{\sigma_2}+a_{13}\,\bar{%
\sigma_3})\wedge\bar{e}^{5} +(a_{21}\,\bar{\sigma_1}+a_{22}\,\bar{\sigma_2}%
+a_{23}\,\bar{\sigma_3})\wedge\bar{e}^{6} +(a_{31}\,\bar{\sigma_1}+a_{32}\,%
\bar{\sigma_2}+a_{33}\,\bar{\sigma_3})\wedge\bar{e}^{7} \Big],
\end{multline}
where $f_{i}=\frac{\partial f}{\partial x_{i}}$, $1\leq i\leq 4$, and $\bar{%
\sigma_1}=\bar{e}^{12}-\bar{e}^{34}$, $\bar{\sigma_2}=\bar{e}^{13}+\bar{e}%
^{24}$ and $\bar{\sigma_3}=\bar{e}^{14}-\bar{e}^{23}$. Letting $f_{ij}=\frac{%
\partial ^{2}f}{\partial x_{j}\partial x_{i}}$, $1\leq i,j\leq 4$, a short
calculation gives 
\begin{equation}  \label{torsion-general}
d\bar{T} =-e^{-4f}\left[ \triangle e^{2f}+2|A|^2\right] \,\bar{e}^{1234} =-%
\left[ \triangle e^{2f}+2|A|^2 \right] e^{1234},
\end{equation}
where $\triangle
e^{2f}=(e^{2f})_{11}+(e^{2f})_{22}+(e^{2f})_{33}+(e^{2f})_{44}$
 is the
standard Laplacian on $\mathbb{R}^4$.

\subsection{The first Pontrjagin form of the $(-)$-connection}

\label{pon7-general}

From  Koszul's formula, we have that the Levi-Civita connection 1-forms $%
(\omega ^{\bar{g}})_{\bar{j}}^{\bar{\imath}}$ of the metric $\bar{g}$ are
given by
\begin{equation}\label{lc-general}
\begin{array}{ll}
(\omega ^{\bar{g}})_{\bar{j}}^{\bar{\imath}}(\bar{e}_{k}) \!&\! =-\frac{1}{2}\Big(%
\bar{g}(\bar{e}_{i},[\bar{e}_{j},\bar{e}_{k}])-\bar{g}(\bar{e}_{k},[\bar{e}%
_{i},\bar{e}_{j}])+\bar{g}(\bar{e}_{j},[\bar{e}_{k},\bar{e}_{i}])\Big) \\[8pt]
\!&\! =\frac{1}{2}\Big(d\bar{e}^{i}(\bar{e}_{j},\bar{e}_{k})-d\bar{e}^{k}(%
\bar{e}_{i},\bar{e}_{j})+d\bar{e}^{j}(\bar{e}_{k},\bar{e}_{i})\Big)
\end{array}
\end{equation}
taking into account $\bar{g}(\bar{e}_{i},[\bar{e}_{j},\bar{e}_{k}])=-d\bar{e}%
^{i}(\bar{e}_{j},\bar{e}_{k})$. With the help of \eqref{lc-general} we
compute the expressions for the connection 1-forms $(\omega ^{-})_{\bar{j}}^{%
\bar{\imath}}$ of the connection $\nabla ^{-}$,
\begin{equation}
(\omega ^{-})_{\bar{j}}^{\bar{\imath}}=(\omega ^{\bar{g}})_{\bar{j}}^{\bar{%
\imath}}-\frac{1}{2}(\bar{T})_{\bar{j}}^{\bar{\imath}},\qquad \text{ where }%
\qquad (\bar{T})_{\bar{j}}^{\bar{\imath}}(\bar{e}_{k})=\bar{T}(\bar{e}_{i},%
\bar{e}_{j},\bar{e}_{k}).  \label{minus-general}
\end{equation}%
Now, \eqref{minus-general}, \eqref{lc-general} and \eqref{torg1-general}
show that the possibly non-zero connection 1-forms $(\omega ^{-})_{\bar{j}}^{%
\bar{\imath}}$ are given in terms of the basis $\{\bar{e}^{1},\ldots ,\bar{e}%
^{7}\}$ by:

\begin{equation}
\begin{array}{l}
(\omega ^{-})_{\bar{2}}^{\bar{1}}=(\omega ^{-})_{\bar{4}}^{\bar{3}%
}=e^{-f}\left( f_{2}\,\bar{e}^{1}-f_{1}\,\bar{e}^{2}+f_{4}\,\bar{e}%
^{3}-f_{3}\,\bar{e}^{4}\right) ,\;\;\; \\[6pt]
(\omega ^{-})_{\bar{3}}^{\bar{1}}=-(\omega ^{-})_{\bar{4}}^{\bar{2}%
}=e^{-f}\left( f_{3}\,\bar{e}^{1}-f_{4}\,\bar{e}^{2}-f_{1}\,\bar{e}%
^{3}+f_{2}\,\bar{e}^{4}\right) , \\
(\omega ^{-})_{\bar{4}}^{\bar{1}}=(\omega ^{-})_{\bar{3}}^{\bar{2}%
}=e^{-f}\left( f_{4}\,\bar{e}^{1}+f_{3}\,\bar{e}^{2}-f_{2}\,\bar{e}%
^{3}-f_{1}\,\bar{e}^{4}\right) ,\;\;\; \\[6pt]
(\omega ^{-})_{\bar{5}}^{\bar{1}}=e^{-2f}\left( -a_{11}\,\bar{e}^{2}-a_{12}\,%
\bar{e}^{3}-a_{13}\,\bar{e}^{4}\right) ,\;\;\;(\omega ^{-})_{\bar{6}}^{\bar{1%
}}=e^{-2f}\left( -a_{21}\,\bar{e}^{2}-a_{22}\,\bar{e}^{3}-a_{23}\,\bar{e}%
^{4}\right) ,\;\;\; \\[6pt]
(\omega ^{-})_{\bar{7}}^{\bar{1}}=e^{-2f}\left( -a_{31}\,\bar{e}^{2}-a_{32}\,%
\bar{e}^{3}-a_{33}\,\bar{e}^{4}\right) ,\;\;(\omega ^{-})_{\bar{5}}^{\bar{2}%
}=e^{-2f}\left( a_{11}\,\bar{e}^{1}+a_{13}\,\bar{e}^{3}-a_{12}\,\bar{e}%
^{4}\right) ,\;\;\; \\[6pt]
(\omega ^{-})_{\bar{6}}^{\bar{2}}=e^{-2f}\left( a_{21}\,\bar{e}^{1}+a_{23}\,%
\bar{e}^{3}-a_{22}\,\bar{e}^{4}\right) ,\;\;\;(\omega ^{-})_{\bar{7}}^{\bar{2%
}}=e^{-2f}\left( a_{31}\,\bar{e}^{1}+a_{33}\,\bar{e}^{3}-a_{32}\,\bar{e}%
^{4}\right) ,\;\;\; \\
(\omega ^{-})_{\bar{5}}^{\bar{3}}=e^{-2f}\left( a_{12}\,\bar{e}^{1}-a_{13}\,%
\bar{e}^{2}+a_{11}\,\bar{e}^{4}\right) ,\;\;\;(\omega ^{-})_{\bar{6}}^{\bar{3%
}}=e^{-2f}\left( a_{22}\,\bar{e}^{1}-a_{23}\,\bar{e}^{2}+a_{21}\,\bar{e}%
^{4}\right) ,\;\;\; \\[6pt]
(\omega ^{-})_{\bar{7}}^{\bar{3}}=e^{-2f}\left( a_{32}\,\bar{e}^{1}-a_{33}\,%
\bar{e}^{2}+a_{31}\,\bar{e}^{4}\right) ,\;\;\;(\omega ^{-})_{\bar{5}}^{\bar{4%
}}=e^{-2f}\left( a_{13}\,\bar{e}^{1}+a_{12}\,\bar{e}^{2}-a_{11}\,\bar{e}%
^{3}\right) ,\;\;\; \\
(\omega ^{-})_{\bar{6}}^{\bar{4}}=e^{-2f}\left( a_{23}\,\bar{e}^{1}+a_{22}\,%
\bar{e}^{2}-a_{21}\,\bar{e}^{3}\right) ,\;\;\;(\omega ^{-})_{\bar{7}}^{\bar{4%
}}=e^{-2f}\left( a_{33}\,\bar{e}^{1}+a_{32}\,\bar{e}^{2}-a_{31}\,\bar{e}%
^{3}\right) .%
\end{array}
\label{connection-forms-general}
\end{equation}

A long straightforward calculation using \eqref{connection-forms-general}
gives in terms of the basis $\{ \bar{e}^{1},\ldots,\bar{e}^{7} \}$ the
following formulas for the curvature 2-forms $(\Omega^{-})^{\bar i}_{\bar j}$
of the connection $\nabla^{-}$:
\begin{equation*}
\begin{array}[t]{rl}
(\Omega^{-})^{\bar 1}_{\bar 2}=\!\! & \!\!
-e^{-2f}[f_{11}+f_{22}+2f_{3}^{2}+2f_{4}^{2}+(a_{11}^{2}+a_{21}^{2}+a_{31}^{2})e^{-2f}] \,%
\bar{e}^{12} \\[4pt]
\!\! &
+e^{-2f}[f_{14}-f_{23}-2f_{1}f_{4}+2f_{2}f_{3}-(a_{11}a_{12}+a_{21}a_{22}+a_{31}a_{32})e^{-2f}] \,%
\bar{\sigma}_{2} \\[4pt]
\!\! & -e^{-2f}[f_{13}+f_{24}-2f_{1}f_{3}-2f_{2}f_{4}+(a_{11}a_{13}+
a_{21}a_{23}+ a_{31} a_{33})e^{-2f}]\, \bar{\sigma}_{3} \\[4pt]
\!\! & -e^{-2f}[f_{33}+f_{44}+2f_{1}^{2}+2f_{2}^{2}+(a_{12}^2+ a_{22}^2+
a_{32}^2+a_{13}^2+a_{23}^2+ a_{33}^2)e^{-2f}]\, \bar{e}^{34}, \\[6pt]
(\Omega^{-})^{\bar 1}_{\bar 3}=\!\! & \!\!
-e^{-2f}[f_{14}+f_{23}-2f_{1}f_{4}-2f_{2}f_{3}+(a_{11} a_{12}+ a_{21}
a_{22}+ a_{31} a_{32})e^{-2f}] \,\bar{\sigma}_{1} \\[4pt]
\!\! & -e^{-2f}[f_{11}+f_{33}+2f_{2}^{2}+2f_{4}^{2}+(a_{12}^2+a_{22}^2+
a_{32}^2)e^{-2f}] \,\bar{e}^{13} \\[4pt]
\!\! & +e^{-2f}[f_{12}-f_{34}-2f_{1}f_{2}+2f_{3}f_{4}+(a_{12} a_{13}+ a_{22}
a_{23}+ a_{32} a_{33})e^{-2f}] \,\bar{\sigma}_{3} \\[4pt]
\!\! & +e^{-2f}[f_{22}+f_{44}+2f_{1}^{2}+2f_{3}^{2}+(a_{11}^2+
a_{21}^2+a_{31}^2+ a_{13}^2+a_{23}^2+ a_{33}^2)e^{-2f}] \,\bar{e}^{24}, \\%
[6pt]
(\Omega^{-})^{\bar 1}_{\bar 4}=\!\! & \!\!
e^{-2f}[f_{13}-f_{24}-2f_{1}f_{3}+2f_{2}f_{4} -( a_{11} a_{13}+ a_{21}
a_{23}+ a_{31} a_{33} )e^{-2f}] \,\bar{\sigma}_{1} \\[4pt]
\!\! & \!\! -e^{-2f}[f_{12}+f_{34}-2f_{1}f_{2}-2f_{3}f_{4} +( a_{12} a_{13}+
a_{22} a_{23}+ a_{32} a_{33} )e^{-2f} ] \,\bar{\sigma}_{2} \\[4pt]
\!\! & \!\! -e^{-2f}[f_{11}+f_{44}+2f_{2}^{2}+2f_{3}^{2}+( a_{13}^2+
a_{23}^2+ a_{33}^2 )e^{-2f}] \,\bar{e}^{14} \\[4pt]
\!\! & \!\! -e^{-2f}[f_{22}+f_{33}+2f_{1}^{2}+2f_{4}^{2}+(a_{11}^2+
a_{21}^2+ a_{31}^2+ a_{12}^2+ a_{22}^2+ a_{32}^2)e^{-2f}] \,\bar{e}^{23},%
\end{array}%
\end{equation*}
\begin{equation*}
\begin{array}[t]{rl}
(\Omega^{-})^{\bar 1}_{\bar 5}=\!\! & \!\! 2 e^{-3f} \left[ ( a_{11} f_{1}-
a_{13} f_{3}+ a_{12} f_{4} ) \,\bar{\sigma}_{1} +( a_{12} f_{1}+ a_{13}
f_{2}- a_{11} f_{4} ) \,\bar{\sigma}_{2} +( a_{13} f_{1}- a_{12} f_{2}+
a_{11} f_{3} ) \,\bar{\sigma}_{3} \right], \\[6pt]
(\Omega^{-})^{\bar 1}_{\bar 6}=\!\! & \!\! 2 e^{-3f} \left[ ( a_{21} f_{1}-
a_{23} f_{3}+ a_{22} f_{4} ) \,\bar{\sigma}_{1} +( a_{22} f_{1}+ a_{23}
f_{2}- a_{21} f_{4} ) \,\bar{\sigma}_{2} +( a_{23} f_{1}- a_{22} f_{2}+
a_{21} f_{3} ) \,\bar{\sigma}_{3} \right], \\[6pt]
(\Omega^{-})^{\bar 1}_{\bar 7}=\!\! & \!\! 2 e^{-3f} \left[ ( a_{31} f_{1}-
a_{33} f_{3}+ a_{32} f_{4} ) \,\bar{\sigma}_{1} +( a_{32} f_{1}+ a_{33}
f_{2}- a_{31} f_{4} ) \,\bar{\sigma}_{2} +( a_{33} f_{1}- a_{32} f_{2}+
a_{31} f_{3} ) \,\bar{\sigma}_{3} \right],%
\end{array}%
\end{equation*}
\begin{equation*}
\begin{array}[t]{rl}
(\Omega^{-})^{\bar 2}_{\bar 3}=\!\! & \!\!
e^{-2f}[f_{13}-f_{24}-2f_{1}f_{3}+2f_{2}f_{4} +( a_{11} a_{13}+ a_{21}
a_{23}+ a_{31} a_{33} )e^{-2f}] \,\bar{\sigma}_{1} \\[4pt]
\!\! & \!\! -e^{-2f}[f_{12}+f_{34}-2f_{1}f_{2}-2f_{3}f_{4} -( a_{12} a_{13}+
a_{22} a_{23}+ a_{32} a_{33} )e^{-2f}] \,\bar{\sigma}_{2} \\[4pt]
\!\! & \!\! -e^{-2f}[f_{11}+f_{44}+2f_{2}^{2}+2f_{3}^{2} +( a_{11}^2+
a_{21}^2+ a_{31}^2+ a_{12}^2+ a_{22}^2+ a_{32}^2 )e^{-2f}] \,\bar{e}^{14} \\%
[4pt]
\!\! & \!\! -e^{-2f}[f_{22}+f_{33}+2f_{1}^{2}+2f_{4}^{2} +( a_{13}^2+
a_{23}^2+ a_{33}^2 )e^{-2f}] \,\bar{e}^{23}, \\[6pt]
(\Omega^{-})^{\bar 2}_{\bar 4}=\!\! & \!\!
e^{-2f}[f_{14}+f_{23}-2f_{1}f_{4}-2f_{2}f_{3} -( a_{11} a_{12}+ a_{21}
a_{22}+ a_{31} a_{32} )e^{-2f}] \,\bar{\sigma}_{1} \\[4pt]
\!\! & \!\! +e^{-2f}[f_{11}+f_{33}+2f_{2}^{2}+2f_{4}^{2} +( a_{11}^2+
a_{21}^2+ a_{31}^2+ a_{13}^2+ a_{23}^2+ a_{33}^2 )e^{-2f}] \,\bar{e}^{13} \\%
[4pt]
\!\! & \!\! -e^{-2f}[f_{12}-f_{34}-2f_{1}f_{2}+2f_{3}f_{4} +( a_{12} a_{13}+
a_{22} a_{23}+ a_{32} a_{33} )e^{-2f}] \,\bar{\sigma}_{3} \\[4pt]
\!\! & \!\! -e^{-2f}[f_{22}+f_{44}+2f_{1}^{2}+2f_{3}^{2} +( a_{12}^2+
a_{22}^2+ a_{32}^2 )e^{-2f}] \,\bar{e}^{24},%
\end{array}%
\end{equation*}
\begin{equation*}
\begin{array}[t]{rl}
(\Omega^{-})^{\bar 2}_{\bar 5}=\!\! & \!\! 2 e^{-3f} \left[ ( a_{11} f_{2}-
a_{12} f_{3}- a_{13} f_{4} ) \,\bar{\sigma}_{1} -( a_{13} f_{1}- a_{12}
f_{2}- a_{11} f_{3} ) \,\bar{\sigma}_{2} +( a_{12} f_{1}+ a_{13} f_{2}+
a_{11} f_{4} ) \,\bar{\sigma}_{3} \right], \\[6pt]
(\Omega^{-})^{\bar 2}_{\bar 6}=\!\! & \!\! 2 e^{-3f} \left[ ( a_{21} f_{2}-
a_{22} f_{3}- a_{23} f_{4} ) \,\bar{\sigma}_{1} -( a_{23} f_{1}- a_{22}
f_{2}- a_{21} f_{3} ) \,\bar{\sigma}_{2} +( a_{22} f_{1}+ a_{23} f_{2}+
a_{21} f_{4} ) \,\bar{\sigma}_{3} \right], \\[6pt]
(\Omega^{-})^{\bar 2}_{\bar 7}=\!\! & \!\! 2 e^{-3f} \left[ ( a_{31} f_{2}-
a_{32} f_{3}- a_{33} f_{4} ) \,\bar{\sigma}_{1} -( a_{33} f_{1}- a_{32}
f_{2}- a_{31} f_{3} ) \,\bar{\sigma}_{2} +( a_{32} f_{1}+ a_{33} f_{2}+
a_{31} f_{4} ) \,\bar{\sigma}_{3} \right],%
\end{array}%
\end{equation*}
\begin{equation*}
\begin{array}[t]{rl}
(\Omega^{-})^{\bar 3}_{\bar 4}=\!\! & \!\!
-e^{-2f}[f_{11}+f_{22}+2f_{3}^{2}+2f_{4}^{2} +( a_{12}^2+ a_{22}^2+
a_{32}^2+ a_{13}^2+ a_{23}^2+ a_{33}^2 )e^{-2f}) \bar{e}^{12} \\[4pt]
\!\! & \!\! +e^{-2f}[f_{14}-f_{23}-2f_{1}f_{4}+2f_{2}f_{3} +( a_{11} a_{12}+
a_{21} a_{22}+ a_{31} a_{32} )e^{-2f}] \,\bar{\sigma}_{2} \\[4pt]
\!\! & \!\!-e^{-2f}[f_{13}+f_{24}-2f_{1}f_{3}-2f_{2}f_{4} -( a_{11} a_{13}+
a_{21} a_{23}+ a_{31} a_{33} )e^{-2f}] \,\bar{\sigma}_{3} \\[4pt]
\!\! & \!\! -e^{-2f}[f_{33}+f_{44}+2f_{1}^{2}+2f_{2}^{2} +( a_{11}^2+
a_{21}^2+ a_{31}^2 )e^{-2f}) \bar{e}^{34},%
\end{array}%
\end{equation*}
\begin{equation*}
\begin{array}[t]{rl}
(\Omega^{-})^{\bar 3}_{\bar 5}=\!\! & \!\! 2 e^{-3f} \left[ ( a_{13} f_{1}+
a_{12} f_{2}+ a_{11} f_{3} ) \,\bar{\sigma}_{1} -( a_{11} f_{2}- a_{12}
f_{3}+ a_{13} f_{4} ) \,\bar{\sigma}_{2} -( a_{11} f_{1}- a_{13} f_{3}-
a_{12} f_{4} ) \,\bar{\sigma}_{3} \right], \\[6pt]
(\Omega^{-})^{\bar 3}_{\bar 6}=\!\! & \!\! 2 e^{-3f} \left[ ( a_{23} f_{1}+
a_{22} f_{2}+ a_{21} f_{3} ) \,\bar{\sigma}_{1} -( a_{21} f_{2}- a_{22}
f_{3}+ a_{23} f_{4} ) \,\bar{\sigma}_{2} -( a_{21} f_{1}- a_{23} f_{3}-
a_{22} f_{4} ) \,\bar{\sigma}_{3} \right], \\[6pt]
(\Omega^{-})^{\bar 3}_{\bar 7}=\!\! & \!\! 2 e^{-3f} \left[ ( a_{33} f_{1}+
a_{32} f_{2}+ a_{31} f_{3} ) \,\bar{\sigma}_{1} -( a_{31} f_{2}- a_{32}
f_{3}+ a_{33} f_{4} ) \,\bar{\sigma}_{2} -( a_{31} f_{1}- a_{33} f_{3}-
a_{32} f_{4} ) \,\bar{\sigma}_{3} \right],%
\end{array}%
\end{equation*}
\begin{equation*}
\begin{array}[t]{rl}
(\Omega^{-})^{\bar 4}_{\bar 5}=\!\! & \!\! 2 e^{-3f} \left[ -( a_{12} f_{1}-
a_{13} f_{2}- a_{11} f_{4} ) \,\bar{\sigma}_{1} +( a_{11} f_{1}+ a_{13}
f_{3}+ a_{12} f_{4} ) \,\bar{\sigma}_{2} -( a_{11} f_{2}+ a_{12} f_{3}-
a_{13} f_{4} ) \,\bar{\sigma}_{3} \right], \\[6pt]
(\Omega^{-})^{\bar 4}_{\bar 6}=\!\! & \!\! 2 e^{-3f} \left[ -( a_{22} f_{1}-
a_{23} f_{2}- a_{21} f_{4} ) \,\bar{\sigma}_{1} +( a_{21} f_{1}+ a_{23}
f_{3}+ a_{22} f_{4} ) \,\bar{\sigma}_{2} -( a_{21} f_{2}+ a_{22} f_{3}-
a_{23} f_{4} ) \,\bar{\sigma}_{3} \right], \\[6pt]
(\Omega^{-})^{\bar 4}_{\bar 7}=\!\! & \!\! 2 e^{-3f} \left[ -( a_{32} f_{1}-
a_{33} f_{2}- a_{31} f_{4} ) \,\bar{\sigma}_{1} +( a_{31} f_{1}+ a_{33}
f_{3}+ a_{32} f_{4} ) \,\bar{\sigma}_{2} -( a_{31} f_{2}+ a_{32} f_{3}-
a_{33} f_{4} ) \,\bar{\sigma}_{3} \right],%
\end{array}%
\end{equation*}
\begin{equation*}
\begin{array}[t]{rl}
(\Omega^{-})^{\bar 5}_{\bar 6}=\!\! & \!\! 2 e^{-4f} \left[ ( a_{12}
a_{23}-a_{13} a_{22} ) \,\bar{\sigma}_{1} -( a_{11} a_{23}-a_{13} a_{21} ) \,%
\bar{\sigma}_{2} +( a_{11} a_{22}-a_{12} a_{21} ) \,\bar{\sigma}_{3} \right],
\\[6pt]
(\Omega^{-})^{\bar 5}_{\bar 7}=\!\! & \!\! 2 e^{-4f} \left[ ( a_{12}
a_{33}-a_{13} a_{32} ) \,\bar{\sigma}_{1} -( a_{11} a_{33}-a_{13} a_{31} ) \,%
\bar{\sigma}_{2} +( a_{11} a_{32}-a_{12} a_{31} ) \,\bar{\sigma}_{3} \right],
\\[6pt]
(\Omega^{-})^{\bar 6}_{\bar 7}=\!\! & \!\! 2 e^{-4f} \left[ ( a_{22}
a_{33}-a_{23} a_{32} ) \,\bar{\sigma}_{1} -( a_{21} a_{33}-a_{23} a_{31} ) \,%
\bar{\sigma}_{2} +( a_{21} a_{32}-a_{22} a_{31} ) \,\bar{\sigma}_{3} \right].%
\end{array}%
\end{equation*}

A long calculation based on the formulas for the curvature 2-form $%
(\Omega^{-})^{\bar i}_{\bar j}$ of $\nabla^-$ gives

\begin{prop}
The first Pontrjagin form of $\nabla^{-}$ is a scalar multiple of $e^{1234}$
given by
\begin{equation}  \label{p1-general}
\pi ^{2}p_{1}(\nabla ^{-}) =\left[ \mathcal{F}_2[f]+\triangle_4 f -\frac{3}{8%
} |A|^{2} \triangle e^{-2f} \right] {e}^{1234},
\end{equation}
where $\mathcal{F}_2[f]$ is the 2-Hessian of $f$, i.e., the sum of all
principle $2\times 2$-minors of the Hessian, and $\triangle_4 f=div(|\nabla
f|^2\nabla f)$ is the 4-Laplacian of $f$.
\end{prop}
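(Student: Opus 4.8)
The plan is to evaluate the defining expression $8\pi^2 p_1(\nabla^-)=\sum_{1\le\bar i<\bar j\le 7}(\Omega^-)^{\bar i}_{\bar j}\wedge(\Omega^-)^{\bar i}_{\bar j}$ directly from the explicit curvature $2$-forms listed above, and then divide by $8$. The first thing to notice, reading off those formulas, is that every $(\Omega^-)^{\bar i}_{\bar j}$ is a linear combination of $\bar e^{12},\bar e^{34},\bar e^{13},\bar e^{24},\bar e^{14},\bar e^{23}$ only, i.e. it lies in $\Lambda^2\langle\bar e^1,\dots,\bar e^4\rangle$, equivalently it is a combination of the self-dual forms $\bar\omega_s$ and the anti-self-dual forms $\bar\sigma_s$. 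Hence each wedge-square is a multiple of $\bar e^{1234}=e^{4f}\,e^{1234}$, which already gives that $p_1(\nabla^-)$ is a function multiple of $e^{1234}$.

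The only computational tool needed is the multiplication table $\bar\omega_s\wedge\bar\omega_t=2\delta_{st}\,\bar e^{1234}$, $\bar\sigma_s\wedge\bar\sigma_t=-2\delta_{st}\,\bar e^{1234}$, $\bar\omega_s\wedge\bar\sigma_t=0$; as a consequence, writing a curvature form as $\Omega=\sum_s q_s\bar\omega_s+\sum_s p_s\bar\sigma_s$ one gets $\Omega\wedge\Omega=2\big(|q|^2-|p|^2\big)e^{4f}\,e^{1234}$. So the scheme is: rewrite each of the twenty-one curvature forms in the $(\bar\omega,\bar\sigma)$-basis — for the six forms with $\bar i,\bar j\le 4$ this just means splitting the $\bar e^{12},\bar e^{34}$-type pair into its $\bar\omega_1,\bar\sigma_1$ parts — apply this identity, and sum. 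It is convenient to group the twenty-one terms into three blocks: (a) $\bar i,\bar j\in\{1,2,3,4\}$; (b) the ``mixed'' forms with $\bar i\le 4<\bar j$; (c) $\bar i,\bar j\in\{5,6,7\}$.

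The key structural points are the cancellations. In block (c) each $(\Omega^-)^{\bar i}_{\bar j}$ has coefficients equal (up to sign) to the $2\times 2$ minors of the $2\times 3$ matrix formed by two rows of $A$, so its square is $-8e^{-4f}$ times the sum of squares of those minors, and summing over the three row-pairs gives $-8e^{-4f}\big(\sum_{\text{all }2\times2\text{ minors of }A}(\det)^2\big)e^{1234}$. In block (a) the $A$-quartic part collects products of the $A$-quadratic pieces $-e^{-4f}(A^TA)_{jj}$ and $-e^{-4f}(A^TA)_{jk}$ appearing in the $\bar e^{ii}$- and $\bar\sigma$-coefficients; by Cauchy--Binet each principal $2\times2$ minor $(A^TA)_{jj}(A^TA)_{kk}-(A^TA)_{jk}^2$ equals the sum of squares of the $2\times2$ minors of $A$ using columns $j,k$, and a short count shows block (a) contributes exactly $+8e^{-4f}\big(\sum_{\text{all }2\times2\text{ minors of }A}(\det)^2\big)e^{1234}$, so the $A$-quartic terms cancel. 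A parallel cancellation kills all $(\nabla f)^4$ terms: already within each single form of block (a) the $(\nabla f)^4$ part of $XY-Z^2-W^2$ is of the type $(f_1^2+f_2^2)(f_3^2+f_4^2)-(f_2f_3-f_1f_4)^2-(f_1f_3+f_2f_4)^2$, which vanishes identically.

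After these cancellations the remaining terms sort into three groups, matching the three terms in \eqref{p1-general} once the $e^{4f}$ from $\bar e^{1234}$ is cancelled against $e^{-4f}$. The products of the purely second-order pieces of the $\bar e^{ii}$- and $\bar\sigma$-coefficients produce only $f_{ii}f_{jj}$ terms (from the $\bar e^{ii}$-pairs) and $-f_{ij}^2$ terms (from the $-p_k^2$), while every ``disjoint-pair'' cross term $f_{ij}f_{kl}$ with $\{i,j\}\cap\{k,l\}=\emptyset$ cancels over the six forms; this assembles to $8\,\mathcal{F}_2[f]\,e^{1234}$. The cross terms of second-order against $(\nabla f)^2$ pieces give $8\big(|\nabla f|^2\triangle f+2\sum_{i,j}f_if_jf_{ij}\big)e^{1234}=8\,\triangle_4 f\,e^{1234}$. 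Finally the cross terms of the $A$-quadratic against the $f$-pieces in block (a), together with the squares of the mixed forms of block (b), combine into $-3\,|A|^2\triangle e^{-2f}\,e^{1234}=-3\,|A|^2 e^{-2f}(4|\nabla f|^2-2\triangle f)\,e^{1234}$; dividing by $8$ gives \eqref{p1-general}. The only real obstacle is the length and precision of this bookkeeping — the genuinely non-obvious inputs being the $A^4$ cancellation (resting on the Cauchy--Binet identity between the principal minors of $A^TA$ and the squared $2\times2$ minors of $A$) and the $(\nabla f)^4$ cancellation — after which pinning down the exact numerical coefficients, in particular the $-\tfrac38$, requires summing the contributions of all twenty-one forms carefully.
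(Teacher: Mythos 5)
Your proposal is correct and follows essentially the same route as the paper, which offers no details beyond stating that ``a long calculation based on the formulas for the curvature 2-forms $(\Omega^{-})^{\bar i}_{\bar j}$'' yields \eqref{p1-general}; you are simply carrying out that computation of $8\pi^2p_1(\nabla^-)=\sum_{\bar i<\bar j}(\Omega^-)^{\bar i}_{\bar j}\wedge(\Omega^-)^{\bar i}_{\bar j}$ explicitly. Your organization of the bookkeeping is sound and checks out on inspection (the $(\bar\omega,\bar\sigma)$-decomposition with $\Omega\wedge\Omega=2(|q|^2-|p|^2)\,\bar e^{1234}$, the Cauchy--Binet cancellation of the quartic $A$-terms between blocks (a) and (c), the Lagrange-identity vanishing of the $(\nabla f)^4$ terms, and the assembly of the survivors into $8\mathcal{F}_2[f]$, $8\triangle_4 f$ and $-3|A|^2\triangle e^{-2f}$), so it in fact documents the paper's omitted calculation rather than deviating from it.
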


The above Proposition shows, in particular, that even though the curvature
2-forms of $\nabla^-$ are quadratic in the gradient of the dilaton, the
Pontrjagin form of $\nabla^-$ is also quadratic in these terms. Furthermore,
if $f$ depends on two of the variables then $\mathcal{F}_2[f]=det (Hess f)$
while if $f$ is a function of one variable $\mathcal{F}_2[f]$ vanishes.

\section{A conformally compact solution with negative $\protect\alpha%
^{\prime}$}

\label{s:compact ex}

In this section we give our first main result. Recall that $K_A$ is the
connected simply connected Lie group with Lie algebra $\mathfrak{K}_A$
determined by \eqref{ecus-general}. Due to the results recalled in Section \ref{ss:dim7 constr} the remaining part is to solve the
anomaly cancellation condition. This we will achieve for the $G_2$
structure \eqref{g2f-general} with the torsion term \eqref{torsion-general},
the Pontrjagin form \eqref{p1-general} of the $\nabla^{-}$ connection, and
the $G_2$-instanton defined below.


\begin{prop}
\label{instanton2-general} Let $\mathrm{D}_{\Lambda }$, $\Lambda=(%
\lambda_{ij}) \in {\mathfrak{g}\mathfrak{l}}_3(\mathbb{R})$, be the linear connection on the
Lie group $K_{A}$ whose possibly non-zero 1-forms are given as follows
\begin{equation*}
\begin{array}{l}
(\omega^{\mathrm{D}_{\Lambda }})_{\bar 2}^{\bar 1} =-(\omega^{\mathrm{D}%
_{\Lambda}})_{\bar 1}^{\bar 2} =-(\omega^{\mathrm{D}_{\Lambda }})_{\bar
4}^{\bar 3} =(\omega^{\mathrm{D}_{\Lambda }})_{\bar 3}^{\bar 4}
=\lambda_{11}\,\bar{e}^{5}+\lambda_{12}\,\bar{e}^{6}+\lambda_{13}\,\bar{e}%
^{7}, \\[8pt]
(\omega^{\mathrm{D}_{\Lambda }})_{\bar 3}^{\bar 1} =-(\omega^{\mathrm{D}%
_{\Lambda}})_{\bar 1}^{\bar 3} =(\omega^{\mathrm{D}_{\Lambda }})_{\bar
4}^{\bar 2} =-(\omega^{\mathrm{D}_{\Lambda }})_{\bar 2}^{\bar 4}
=\lambda_{21}\,\bar{e}^{5}+\lambda_{22}\,\bar{e}^{6}+\lambda_{23}\,\bar{e}%
^{7}, \\[8pt]
(\omega^{\mathrm{D}_{\Lambda }})_{\bar 4}^{\bar 1} =-(\omega^{\mathrm{D}%
_{\Lambda}})_{\bar 1}^{\bar 4} =-(\omega^{\mathrm{D}_{\Lambda }})_{\bar
3}^{\bar 2} =(\omega^{\mathrm{D}_{\Lambda }})_{\bar 2}^{\bar 3}
=\lambda_{31}\,\bar{e}^{5}+\lambda_{32}\,\bar{e}^{6}+\lambda_{33}\,\bar{e}%
^{7}.%
\end{array}%
\end{equation*}
Then, $\mathrm{D}_{\Lambda}$ is a $G_2$-instanton with respect to the $G_2$
structure defined by \eqref{g2f-general} which preserves the metric if and
only if $\mathrm{rank}(\Lambda) \leq 1$.
\end{prop}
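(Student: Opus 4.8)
The plan is to compute the curvature 2-forms $(\Omega^{\mathrm{D}_\Lambda})^{\bar i}_{\bar j}$ directly from the given connection 1-forms via $\Omega^{\bar i}_{\bar j} = d\omega^{\bar i}_{\bar j} + \omega^{\bar i}_{\bar k}\wedge\omega^{\bar k}_{\bar j}$, and then impose the $G_2$-instanton condition \eqref{in2} with respect to $\Theta$ in \eqref{g2f-general}. First I would observe that the nonzero 1-forms all lie in $\mathrm{span}\{\bar e^5,\bar e^6,\bar e^7\}$, so in the quadratic term $\omega^{\bar i}_{\bar k}\wedge\omega^{\bar k}_{\bar j}$ the wedge of two such 1-forms produces only $\bar e^{56},\bar e^{57},\bar e^{67}$ terms, while the exterior derivative term $d\omega^{\bar i}_{\bar j}$ requires differentiating $\lambda_{k1}\bar e^5+\lambda_{k2}\bar e^6+\lambda_{k3}\bar e^7$. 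Using $d\bar e^5 = e^{-2f}(a_{11}\bar\sigma_1+a_{12}\bar\sigma_2+a_{13}\bar\sigma_3)$ etc. from \eqref{ecus-general}/\eqref{conf 2forms} (together with $d\bar e^i = df\wedge\bar e^i$ for $i\le 4$, which only contributes terms that cancel in the combinations appearing), one gets curvature 2-forms that are combinations of the $\bar\sigma_j$ (horizontal, coming from $d\bar e^{4+i}$) plus combinations of $\bar e^{5\cdot},\bar e^{6\cdot},\bar e^{7\cdot}$ (mixed, from the quadratic term).

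Next I would write out the $G_2$-instanton condition explicitly. Since the $G_2$ 3-form is $\Theta = \omega_1\wedge e^7 + \omega_2\wedge e^5 - \omega_3\wedge e^6 + e^{567}$, the contraction \eqref{in2} with a 2-form $\beta$ annihilates $\beta$ iff $*( \beta\wedge *\Theta) = -\beta$ up to the standard $G_2$-representation splitting; concretely, a 2-form is a $G_2$-instanton (lies in $\mathfrak{g}_2\subset\Lambda^2$) iff its $\Lambda^2_7$-component vanishes. I would decompose each $(\Omega^{\mathrm{D}_\Lambda})^{\bar i}_{\bar j}$ against this splitting. The horizontal part built from the anti-self-dual $\bar\sigma_j$ automatically sits in $\mathfrak{g}_2$ when paired correctly with the vertical indices (this is exactly the mechanism that made $\Theta$ co-calibrated of pure type), so the obstruction comes from the $\bar e^{5\cdot},\bar e^{6\cdot},\bar e^{7\cdot}$ mixed terms coming from the quadratic piece of the curvature. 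Those mixed terms are bilinear expressions in the rows of $\Lambda$: schematically the $\bar e^{56},\bar e^{57},\bar e^{67}$ coefficients in the various $(\Omega^{\mathrm{D}_\Lambda})^{\bar i}_{\bar j}$ are the $2\times 2$ minors $\lambda_{k1}\lambda_{l2}-\lambda_{k2}\lambda_{l1}$, etc., of $\Lambda$. Requiring all of them to vanish — which is what the instanton condition forces — is precisely the statement that all $2\times 2$ minors of $\Lambda$ vanish, i.e. $\mathrm{rank}(\Lambda)\le 1$.

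For the converse, I would take $\Lambda$ of rank $\le 1$, say $\Lambda = u\, v^T$ with $u,v\in\mathbb{R}^3$, substitute into the connection 1-forms, and verify that all the quadratic cross-terms in the curvature drop out (since every $2\times 2$ minor is zero), leaving only the horizontal $\bar\sigma_j$-terms; then a short check — essentially the same computation that established \eqref{f1-general} and the pure-type property of $\Theta$ — shows \eqref{in2} holds. I would also check along the way that $\mathrm{D}_\Lambda$ preserves the metric, which is immediate since the 1-forms are skew in the stated index pattern ($(\omega^{\mathrm{D}_\Lambda})^{\bar i}_{\bar j} = -(\omega^{\mathrm{D}_\Lambda})^{\bar j}_{\bar i}$ as displayed), so $\mathrm{D}_\Lambda$ is an $SO(7)$-connection regardless of $\Lambda$.

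The main obstacle is organizational rather than conceptual: one must correctly bookkeep the $G_2$-representation decomposition $\Lambda^2 = \Lambda^2_7\oplus\Lambda^2_{14}$ in the non-orthonormal-looking but actually orthonormal coframe $\{\bar e^1,\dots,\bar e^7\}$, and track which combinations of $\bar e^{i(4+j)}$ and $\bar\sigma_k\wedge$-terms are annihilated by \eqref{in2}. The cleanest route is probably to note that the horizontal-$\bar\sigma$ part of each curvature component has the same structural form as the curvature of $\nabla^-$ computed in Section \ref{pon7-general} (which is a $G_2$-connection since $\nabla^+$ has holonomy in $G_2$ by \eqref{sol7}), so it contributes nothing to the instanton obstruction, and only the purely vertical quadratic terms — governed by the minors of $\Lambda$ — matter. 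Once that reduction is made, the equivalence with $\mathrm{rank}(\Lambda)\le 1$ is a two-line linear-algebra fact.
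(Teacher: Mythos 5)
Your proposal is correct and follows essentially the same route as the paper: compute the curvature $\Omega^{\mathrm{D}_\Lambda}=d\omega^{\mathrm{D}_\Lambda}+\omega^{\mathrm{D}_\Lambda}\wedge\omega^{\mathrm{D}_\Lambda}$, observe that the horizontal part is a combination of the anti-self-dual $\bar\sigma_i$ (which automatically satisfies \eqref{in2} since they are orthogonal to the self-dual $\bar\omega_i$ appearing in $\bar\Theta$), and that the $\bar e^{56},\bar e^{57},\bar e^{67}$ coefficients coming from the quadratic term are exactly the $2\times 2$ minors of $\Lambda$, whose vanishing is equivalent to $\mathrm{rank}(\Lambda)\leq 1$. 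The paper simply records the explicit curvature 2-forms and reads off the same conclusion.
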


\begin{proof}
Let us use the notation $\Lambda _{ijkl}=\lambda _{ik}\lambda _{jl}-\lambda
_{jk}\lambda _{il}=\mathrm{det}\,\left( \!\!\!%
\begin{array}{cc}
\lambda _{ik} & \lambda _{il} \\
\lambda _{jk} & \lambda _{jl}%
\end{array}%
\!\right) $ for the $2\times 2$ minors of $\Lambda$. A direct calculation using %
\eqref{ecus-general} shows that the possibly non-zero curvature forms $%
(\Omega ^{\mathrm{D}_{\Lambda }})_{\bar{j}}^{\bar{\imath}}$ of the
connection $\mathrm{D}_{\Lambda }$ are:
\begin{equation*}
\begin{array}{ll}
(\Omega ^{\mathrm{D}_{\Lambda }})_{\bar{2}}^{\bar{1}}\! & \!=-(\Omega ^{%
\mathrm{D}_{\Lambda }})_{\bar{1}}^{\bar{2}}=-(\Omega ^{\mathrm{D}_{\Lambda
}})_{\bar{4}}^{\bar{3}}=(\Omega ^{\mathrm{D}_{\Lambda }})_{\bar{3}}^{\bar{4}%
}=\ \ e^{-2f}(a_{11}\lambda _{11}+a_{21}\lambda _{12}+a_{31}\lambda _{13})\,%
\bar{\sigma}_{1} \\[4pt]
& \ \ \ +e^{-2f}(a_{12}\lambda _{11}+a_{22}\lambda _{12}+a_{32}\lambda
_{13})\,\bar{\sigma}_{2}+e^{-2f}(a_{13}\lambda _{11}+a_{23}\lambda
_{12}+a_{33}\lambda _{13})\,\bar{\sigma}_{3} \\[4pt]
& \ \ \ +\ 2\Lambda _{2312}\,\bar{e}^{56}+2\Lambda _{2313}\,\bar{e}%
^{57}+2\Lambda _{2323}\,\bar{e}^{67}, \\[8pt]
(\Omega ^{\mathrm{D}_{\Lambda }})_{\bar{3}}^{\bar{1}}\! & \!=-(\Omega ^{%
\mathrm{D}_{\Lambda }})_{\bar{1}}^{\bar{3}}=(\Omega ^{\mathrm{D}_{\Lambda
}})_{\bar{4}}^{\bar{2}}=-(\Omega ^{\mathrm{D}_{\Lambda }})_{\bar{2}}^{\bar{4}%
}=\ \ e^{-2f}(a_{11}\lambda _{21}+a_{21}\lambda _{22}+a_{31}\lambda _{23})\,%
\bar{\sigma}_{1} \\[4pt]
& \ \ \ +e^{-2f}(a_{12}\lambda _{21}+a_{22}\lambda _{22}+a_{32}\lambda
_{23})\,\bar{\sigma}_{2}+e^{-2f}(a_{13}\lambda _{21}+a_{23}\lambda
_{22}+a_{33}\lambda _{23})\,\bar{\sigma}_{3} \\[6pt]
& \ \ \ -\ 2\Lambda _{1312}\,\bar{e}^{56}-2\Lambda _{1313}\,\bar{e}%
^{57}-2\Lambda _{1323}\,\bar{e}^{67}, \\[8pt]
(\Omega ^{\mathrm{D}_{\Lambda }})_{\bar{4}}^{\bar{1}}\! & \!=-(\Omega ^{%
\mathrm{D}_{\Lambda }})_{\bar{1}}^{\bar{4}}=-(\Omega ^{\mathrm{D}_{\Lambda
}})_{\bar{3}}^{\bar{2}}=(\Omega ^{\mathrm{D}_{\Lambda }})_{\bar{2}}^{\bar{3}%
}=\ \ e^{-2f}(a_{11}\lambda _{31}+a_{21}\lambda _{32}+a_{31}\lambda _{33})\,%
\bar{\sigma}_{1} \\[4pt]
& \ \ \ +e^{-2f}(a_{12}\lambda _{31}+a_{22}\lambda _{32}+a_{32}\lambda
_{33})\,\bar{\sigma}_{2}+e^{-2f}(a_{13}\lambda _{31}+a_{23}\lambda
_{32}+a_{33}\lambda _{33})\,\bar{\sigma}_{3} \\[6pt]
& \ \ \ +\ 2\Lambda _{1212}\,\bar{e}^{56}+2\Lambda _{1213}\,\bar{e}%
^{57}+2\Lambda _{1223}\,\bar{e}^{67}.%
\end{array}%
\end{equation*}%
Now, it is straightforward to see that $\mathrm{D}_{\Lambda }$ satisfies %
\eqref{in2} if and only if all the $2\times 2$ minors $\Lambda _{ijkl}$ of
the matrix $\Lambda $ vanish. Therefore, $\mathrm{D}_{\Lambda }$ is a $G_{2}$%
-instanton if and only $\mathrm{rank}(\Lambda )\leq 1$.
\end{proof}

\begin{cor}
\label{instanton2-cor-general} For $\Lambda=(\lambda_{ij}) \in {\mathfrak{g}\mathfrak{l}}_3(\mathbb{R})$
a matrix of rank one, let $\mathrm{D}_{\Lambda}$ be the $G_2$%
-instanton defined in Proposition~\ref{instanton2-general}. Then, the first
Pontrjagin form $p_{1}(\mathrm{D}_{\Lambda})$ of the $G_2$-instanton $%
\mathrm{D}_{\Lambda}$ is given by
\begin{equation}  \label{abinst-general}
8\pi ^{2}p_{1}(\mathrm{D}_{\Lambda})= -4\lambda^2\,e^{1234},
\end{equation}
where $\lambda=|\Lambda\, A|$ is the norm of the product matrix $\Lambda\, A$.
\end{cor}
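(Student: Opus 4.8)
The plan is to compute $p_1(\mathrm{D}_\Lambda)$ directly from the definition $8\pi^2 p_1(\mathrm{D}_\Lambda)=\sum_{1\le i<j\le 7}(\Omega^{\mathrm{D}_\Lambda})^{\bar i}_{\bar j}\wedge(\Omega^{\mathrm{D}_\Lambda})^{\bar i}_{\bar j}$, using the explicit curvature $2$-forms already obtained in the proof of Proposition~\ref{instanton2-general}. Since $\Lambda$ has rank one, all the $2\times 2$ minors $\Lambda_{ijkl}$ vanish, so the $\bar e^{56},\bar e^{57},\bar e^{67}$ terms in the three curvature forms disappear; only the $\bar\sigma_s$-terms survive, and the nonzero curvature forms reduce to $(\Omega^{\mathrm{D}_\Lambda})^{\bar 1}_{\bar 2}=-(\Omega^{\mathrm{D}_\Lambda})^{\bar 3}_{\bar 4}=e^{-2f}\sum_s b_{1s}\bar\sigma_s$, and analogously with rows $2,3$ of the matrix $B=\Lambda A$, where $b_{is}=\sum_k a_{ks}\lambda_{ik}$ (note this is $(\Lambda A)_{is}$). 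So I would first record that $B=\Lambda A$ and that the six possibly nonzero curvature $2$-forms come in the three pairs $\{(\Omega)^{\bar 1}_{\bar 2},(\Omega)^{\bar 3}_{\bar 4}\}$, $\{(\Omega)^{\bar 1}_{\bar 3},(\Omega)^{\bar 2}_{\bar 4}\}$, $\{(\Omega)^{\bar 1}_{\bar 4},(\Omega)^{\bar 2}_{\bar 3}\}$, each pair being $\pm e^{-2f}\sum_s b_{is}\bar\sigma_s$ for $i=1,2,3$ respectively.

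Next I would compute the wedge squares. For each pair the two members contribute equally (since $(\pm\alpha)\wedge(\pm\alpha)=\alpha\wedge\alpha$), so $8\pi^2 p_1(\mathrm{D}_\Lambda)=2\sum_{i=1}^3 e^{-4f}\bigl(\sum_{s}b_{is}\bar\sigma_s\bigr)\wedge\bigl(\sum_{t}b_{it}\bar\sigma_t\bigr)=2e^{-4f}\sum_{i=1}^3\sum_{s,t}b_{is}b_{it}\,\bar\sigma_s\wedge\bar\sigma_t$. Now I invoke the orthogonality of the anti-self-dual forms: $\bar\sigma_s\wedge\bar\sigma_t = e^{4f}\sigma_s\wedge\sigma_t$ by \eqref{conf 2forms}, and a direct check on $\mathbb{R}^4$ gives $\sigma_s\wedge\sigma_t=-2\delta_{st}\,e^{1234}$ (each $\sigma_s$ is anti-self-dual of norm $\sqrt2$, with $\sigma_s\wedge\sigma_s=-|\sigma_s|^2\,\mathrm{vol}=-2e^{1234}$, and cross terms vanish). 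Hence the $e^{-4f}$ cancels and $8\pi^2 p_1(\mathrm{D}_\Lambda)=2\sum_{i=1}^3\sum_s b_{is}^2\,(-2e^{1234})=-4\Bigl(\sum_{i,s}b_{is}^2\Bigr)e^{1234}=-4|B|^2 e^{1234}=-4|\Lambda A|^2 e^{1234}$, which with $\lambda=|\Lambda A|$ is exactly \eqref{abinst-general}.

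The only mild subtlety, and the step I would be most careful about, is bookkeeping the signs and making sure the pairing is $\bar\sigma_s\wedge\bar\sigma_t = -2\delta_{st}e^{4f}e^{1234}$ rather than $+$; since the paper uses the anti-self-dual $\sigma$'s (as opposed to the self-dual $\omega$'s, which would give $+2\delta_{st}e^{1234}$), one should confirm orientation conventions are consistent with \eqref{g21-general}, where $\tfrac12\omega_1\wedge\omega_1$ appears, i.e. $\omega_1\wedge\omega_1=2e^{1234}$ and correspondingly $\sigma_1\wedge\sigma_1=-2e^{1234}$. Everything else is the routine linear-algebra identity $\sum_{i,s}(\Lambda A)_{is}^2=\|\Lambda A\|^2$. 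I would therefore present the proof in three short moves: (i) specialize the curvature forms to rank one; (ii) wedge and use $\bar\sigma_s\wedge\bar\sigma_t=-2\delta_{st}e^{4f}e^{1234}$; (iii) recognize the resulting coefficient as $-4|\Lambda A|^2$.

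\begin{proof}
By Proposition~\ref{instanton2-general} the hypothesis $\mathrm{rank}(\Lambda)\le 1$ is equivalent to the vanishing of all $2\times 2$ minors $\Lambda_{ijkl}$, so in the curvature formulas displayed in its proof all the terms in $\bar e^{56},\bar e^{57},\bar e^{67}$ drop out. Writing $B=\Lambda A$, so that $b_{is}=(\Lambda A)_{is}=\sum_{k}\lambda_{ik}a_{ks}$, the remaining nonzero curvature $2$-forms are
\begin{equation*}
(\Omega^{\mathrm{D}_\Lambda})^{\bar 1}_{\bar 2}=-(\Omega^{\mathrm{D}_\Lambda})^{\bar 3}_{\bar 4}=e^{-2f}\sum_{s=1}^3 b_{1s}\bar\sigma_s,\quad
(\Omega^{\mathrm{D}_\Lambda})^{\bar 1}_{\bar 3}=(\Omega^{\mathrm{D}_\Lambda})^{\bar 2}_{\bar 4}=e^{-2f}\sum_{s=1}^3 b_{2s}\bar\sigma_s,
\end{equation*}
\begin{equation*}
(\Omega^{\mathrm{D}_\Lambda})^{\bar 1}_{\bar 4}=-(\Omega^{\mathrm{D}_\Lambda})^{\bar 2}_{\bar 3}=e^{-2f}\sum_{s=1}^3 b_{3s}\bar\sigma_s .
\end{equation*}
Since each unordered pair contributes the same wedge square, the definition of the first Pontrjagin form gives
\begin{equation*}
8\pi^2 p_1(\mathrm{D}_\Lambda)=2\,e^{-4f}\sum_{i=1}^3\sum_{s,t=1}^3 b_{is}b_{it}\,\bar\sigma_s\wedge\bar\sigma_t .
\end{equation*}
From \eqref{conf 2forms} we have $\bar\sigma_s\wedge\bar\sigma_t=e^{4f}\sigma_s\wedge\sigma_t$, and a direct computation on $\mathbb{R}^4$ gives $\sigma_s\wedge\sigma_t=-2\delta_{st}\,e^{1234}$ (compatibly with $\omega_1\wedge\omega_1=2e^{1234}$ in \eqref{g21-general}). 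Hence the conformal factors cancel and
\begin{equation*}
8\pi^2 p_1(\mathrm{D}_\Lambda)=2\sum_{i=1}^3\sum_{s=1}^3 b_{is}^2\,(-2\,e^{1234})=-4\Bigl(\sum_{i,s=1}^3 b_{is}^2\Bigr)e^{1234}=-4\,|\Lambda A|^2\,e^{1234}=-4\lambda^2\,e^{1234},
\end{equation*}
which is \eqref{abinst-general}.
\end{proof}
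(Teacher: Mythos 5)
Your proof is correct and follows exactly the route the paper takes: the paper's own proof is a one-line remark that the curvature formulas from Proposition~\ref{instanton2-general} with vanishing minors imply the identity, and you have simply written out that computation in full (including the correct identification of the surviving coefficients as the entries of $\Lambda A$ and the pairing $\sigma_s\wedge\sigma_t=-2\delta_{st}e^{1234}$). No discrepancies.
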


\begin{proof}
Since the $2\times 2$ minors $\Lambda_{ijkl}$ are all zero, the formulas for the
curvature forms $(\Omega^{\mathrm{D}_{\Lambda}})_{\bar j}^{\bar i}$ given in
the proof of Proposition~\ref{instanton2-general} imply the claimed
identity.

\end{proof}

We turn to the proof of our first main result.

\begin{thrm}
\label{t:cpmct strominger} The conformally compact manifold $%
M^7=(\Gamma\backslash K_{A},\bar{\Theta},\nabla^-, D_{\Lambda}, f)$ is a $%
G_2 $-manifold which solves the Strominger system with non-constant dilaton $%
f$, non-trivial flux $H=\bar T$, non-flat instanton $D_{\Lambda}$ using the
first Pontrjagin form of $\nabla^-$ and negative $\alpha^{\prime }$. The
dilaton $f$ depends on one variable and is determined as a real slice of the
Weierstrass' elliptic function.

The conformally compact manifold $M^7=(\Gamma\backslash K_{A},\bar{\Theta}%
,\nabla^-, D_{\Lambda}, f)$ satisfies the heterotic equations of motion %
\eqref{mot} up to first order of $\alpha^{\prime }$.
\end{thrm}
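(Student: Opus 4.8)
The plan is to verify the two conditions that jointly characterize a solution of the Strominger system: the gravitino/dilatino equations (already guaranteed by the geometric model of Section~\ref{ss:dim7 constr}) and the anomaly cancellation \eqref{acgen}. First I would recall that, by \eqref{f1f-general}--\eqref{lif-general}, the $G_2$ structure $\bar\Theta$ of \eqref{g2f-general} is conformally co-calibrated of pure type with exact Lee form $\bar\theta=2df$, so \eqref{sol7} holds with $\phi=-2f$; hence the first two Killing-spinor equations in \eqref{sup1} are satisfied for \emph{any} smooth $f$ on $\mathbb{R}^4$, and the gaugino equation \eqref{in2} holds because $D_\Lambda$ is a $G_2$-instanton whenever $\mathrm{rank}(\Lambda)\le 1$ by Proposition~\ref{instanton2-general}. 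Thus the only remaining equation is \eqref{acgen} with $\nabla=\nabla^-$, namely
\begin{equation*}
d\bar T=\frac{\alpha'}4\Big(\mathrm{Tr}(R^-\wedge R^-)-\mathrm{Tr}(F^{D_\Lambda}\wedge F^{D_\Lambda})\Big)=2\pi^2\alpha'\big(p_1(\nabla^-)-p_1(D_\Lambda)\big).
\end{equation*}
Using \eqref{torsion-general}, \eqref{p1-general} and \eqref{abinst-general}, all three terms are scalar multiples of $e^{1234}$, so \eqref{acgen} reduces to the single scalar ODE/PDE
\begin{equation*}
-\big[\triangle e^{2f}+2|A|^2\big]=\frac{\alpha'}{2}\Big[\mathcal F_2[f]+\triangle_4 f-\tfrac38|A|^2\triangle e^{-2f}+\tfrac{\lambda^2}{2}\Big],
\end{equation*}
with $\lambda=|\Lambda A|$. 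I would then impose that $f=f(x_1)$ depends on a single variable: by the remark after Proposition (the one proving \eqref{p1-general}), $\mathcal F_2[f]\equiv 0$ in this case, while $\triangle e^{2f}=(e^{2f})''$, $\triangle e^{-2f}=(e^{-2f})''$ and $\triangle_4 f=(f'^3)'=3f'^2f''$. Writing $u=e^{2f}>0$, so $f'=u'/(2u)$ and $f''=u''/(2u)-u'^2/(2u^2)$, the equation becomes a second-order autonomous ODE for $u$; multiplying by an integrating factor and using the standard reduction for autonomous equations (set $p=u'$, view $p^2$ as a function of $u$) should bring it to the form $\tfrac{d}{dx}\big(\text{energy}\big)=0$, i.e. a first integral $u'^2=P(u)$ with $P$ a cubic (or at worst quartic rational) in $u$; this is exactly the differential equation satisfied by (a shift/scaling of) the Weierstrass $\wp$-function, which yields the stated description of $f$ as a real slice of an elliptic function of order two, and pins down the sign of $\alpha'$ (it must be negative, consistent with the section title) from the signs of the leading coefficients. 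I would also check the values of the constants so that $u=e^{2f}$ stays positive on the relevant real interval, and note that since $f$ is periodic in $x_1$ (a real slice of $\wp$) the solution descends to the compact nilmanifold $\Gamma\backslash K_A$, or more precisely to the conformally compact manifold obtained after the conformal change $\bar g=e^{2f}g_{\mathbb{R}^4}+\sum\eta_i^2$ on the torus bundle, which is the object named $M^7$ in the statement.

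For the second assertion — that $M^7$ satisfies the heterotic equations of motion \eqref{mot} up to first order in $\alpha'$ — I would invoke the general principle recalled in the introduction: by \eqref{dtr} together with the anomaly cancellation \eqref{acgen} just established, $R^+(X,Y,Z,U)-R^-(Z,U,X,Y)=O(\alpha')$, and since the first Killing-spinor equation forces $\mathrm{Hol}(\nabla^+)\subset G_2$ we get that $R^-$ is a $G_2$-instanton up to terms of order $\alpha'$; by the result of \cite{Iv0} (equivalently the discussion around \eqref{in2}--\eqref{dtr}), a solution of the Strominger system whose curvature term is built from the $(-)$-connection automatically solves \eqref{mot} to first order in $\alpha'$. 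Hence nothing further needs to be checked for this part.

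The main obstacle is the ODE analysis: one must show that the resulting autonomous second-order equation for $u=e^{2f}$ genuinely integrates to a Weierstrass equation $u'^2=4u^3-g_2 u-g_3$ (after affine normalization), identify $g_2,g_3$ and the sign of $\alpha'$ in terms of $|A|$ and $\lambda$, and exhibit a real interval (or real period) on which $u>0$ so that $f$ is a bona fide real-analytic dilaton — this last positivity/periodicity point is what makes the solution "conformally compact" rather than merely local, and is the step requiring genuine care rather than routine computation.
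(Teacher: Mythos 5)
Your proposal follows essentially the same route as the paper: reduce to the single scalar anomaly equation \eqref{e:anomaly negative alpha}, specialize to $f=f(x^1)$ so $\mathcal F_2[f]=0$, obtain a first integral leading to the Weierstrass equation \eqref{solv5}, and dispose of the equations of motion via the $(-)$-connection being an instanton to first order in $\alpha'$. The only differences are cosmetic: your anomaly equation is off by an overall factor of $4$ in the $\alpha'$ terms, and the paper gets the first integral \eqref{solv4} by observing that (after choosing $2|A|^2=-\alpha'\lambda^2$ to kill the constant term, which forces $\alpha'<0$) every remaining term is an exact derivative, rather than by your $p=u'$ reduction.
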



\begin{proof}
By the construction in Section \ref{ss:dim7 constr} we are left with solving
the anomaly cancellation condition $d\bar{T}=\frac{\alpha ^{\prime }}{4}8\pi
^{2}\Big(p_{1}(\nabla ^{-})-p_{1}(D_\Lambda)\Big)$, which in our case taking
into account \eqref{torsion-general}, \eqref{p1-general} and %
\eqref{abinst-general} becomes \emph{the single}  non-linear equation
\begin{equation}  \label{e:anomaly negative alpha}
\triangle e^{2f}+2|A|^2 +\frac{\alpha ^{\prime }}{4}\left[ 8\mathcal{F}%
_2[f]+8\triangle_4 f -3 |A|^{2} \triangle e^{-2f} +4\lambda^2\right]=0.
\end{equation}
Up to relabelling the constants, this is the same equation as the one
obtained through the anomaly cancellation that appeared in \cite[Section 4.2]%
{FIUVas}. Accordingly, we assume that the function $f$ depends on one
variable, $f=f(x^{1})$,
and for a \emph{negative} $\alpha ^{\prime }$ we choose $2|A|^2+\alpha
^{\prime }\lambda ^{2}=0$, i.e., we let $\alpha ^{\prime }=-\alpha^2$ so
that $2|A|^2=\alpha ^{2}\lambda ^{2}$. This simplifies
\eqref{e:anomaly
negative alpha} to the ordinary differential equation
\begin{equation}  \label{solv4}
\left( e^{2f}\right) ^{\prime }+\frac34\alpha ^{2}|A|^2\left( e^{-2f}\right)
^{\prime }-2\alpha ^{2 }f^{\prime 3}=C_0=const.
\end{equation}%
A solution of the last equation for $C_0=0$ was found in \cite[Section 4.2]%
{FIUVas}. For ease of reading we repeat the key steps of the derivation in
order to obtain a seven dimensional solution of the Strominger system. The
substitution $u=\alpha^{-2} e^{2f}$ allows us to write \eqref{solv4} in the
form
\begin{equation*}
\left( e^{2f}\right) ^{\prime }+\frac34\alpha ^{2}|A|^2\left( e^{-2f}\right)
^{\prime }-2\alpha ^{2 }f^{\prime 3}=\frac{\alpha^2 u^{\prime }}{4u^{3}}%
\left( 4u^{3}-3\frac {|A|^2}{\alpha ^{2 }}u-u^{\prime 2}\right) .
\end{equation*}%
For $C_0=0$ we shall solve the following ordinary differential equation for
the function $u=u(x^1)>0$ 
\begin{equation}  \label{solv5}
u^{\prime 2}={4}u^{3}-3\frac {|A|^2}{\alpha ^{2 }}u=4u\left( u-d\right)
\left( u+d\right) ,\qquad d=\sqrt{3|A|^2}/ \alpha.
\end{equation}%
Replacing the real derivative with the complex derivative leads to the
Weierstrass' equation
\begin{equation}  \label{e:Weirstrass eqn}
\left (\frac {d\, \mathcal{P}}{dz}\right )^2=4\mathcal{P}\left( \mathcal{P}%
-d\right) \left( \mathcal{P}+d\right)
\end{equation}
for the doubly periodic Weierstrass $\mathcal{P}$ function with a pole at
the origin. As well known, \cite{Erd} and \cite{Ahl}, near the origin $%
\mathcal{P}$ has the expansion
\begin{equation*}
\mathcal{P}(z) = \frac {1}{z^2} + \frac {d^2}{5} z^2 + d_1z^6 + \cdots,
\end{equation*}
which has no $z^4$ term and only even powers of $z$. Furthermore, see \cite%
{Erd} and \cite{Ahl}, letting $\tau_{\pm}$ be the basic half-periods such
that $\tau _{+}$ is real and $\tau _{-}$ is purely imaginary we have that $%
\mathcal{P}$ is real valued on the lines $\mathfrak{R}\mathfrak{e}\,z=m\tau
_{+}$ or $\mathfrak{I}\mathfrak{m}\,z=im\tau _{-}$, $m\in \mathbb{Z}$. In
the fundamental region centered at the origin, where $\mathcal{P}$ has a
pole of order two, we have that $\mathcal{P}(z)$ decreases from $+\infty $
to $a$ to $0$ to $-a$ to $-\infty $ as $z$ varies along the sides of the
half-period rectangle from $0$ to $\tau _{+}$ to $\tau _{+}+\tau _{-}$ to $%
\tau _{-}$ to $0$.

Thus, $u(x^1)=\mathcal{P}(x^1)$ defines a non-negative $2\tau_{+}$-periodic
function with singularities at the points $2n \tau_{+}$, $n\in \mathbb{Z}$,
which solves the real equation \eqref{solv5}. From the Laurent expansion of
the Weierstrass' function it follows
\begin{equation*}
u(x_1)=\frac {1}{(x^1)^2}\left (1+ \frac {d^2}{5} (x^1)^4 + \cdots \right).
\end{equation*}
By construction, $f= \frac 12 \ln (\alpha^2 u)$ is a periodic function with
singularities on the real line which is a solution to equation %
\eqref{e:anomaly negative alpha}. {\ 
Therefore the $G_2$ structure defined by $\bar \Theta$ descends to the $7$%
-dimensional nilmanifold $M^{7}=\Gamma \backslash K_{A}$ with singularity,
determined by the singularity of $u$, where $K_{A}$ is the 2-step nilpotent
Lie group with Lie algebra $\mathfrak{K}_{A}$, defined by %
\eqref{ecus-general}, and $\Gamma $ is a lattice with the same period as $f$%
, i.e., $2 \tau_{+}$ in all variables. In fact, as seen from the asymptotic
behavior of $u$, $M^7$ is the total space of a $\mathbb{T}^3$ bundle over
the asymptotically hyperbolic manifold $M^4$ with metric
\begin{equation*}
\bar g_H =u(x^1)\left ( (dx^1)^2+(dx^2)^2 + (dx^3)^2 +(dx^4)^2 \right ),
\end{equation*}
which is a conformally compact 4-torus with conformal boundary at infinity a
flat 3-torus. Thus, we 
conclude that there is a complete solution with non-constant dilaton,
non-trivial instanton and flux and with a negative $\alpha ^{\prime }$
parameter. }

The last statement follows from the fact that the $(-)$-connection is an
instanton up to the first order of $\alpha^{\prime }$. This completes the
proof of Theorem \ref{t:cpmct strominger}.
\end{proof}

From the apparent $\mathbb{Z}_2 $-symmetry of $u$ determined by the symmetry
with respect to the line $x^1=\tau_+ $ we also obtain a solution on the
quotient $M^7/\mathbb{Z}_2$.

\section{A complete solution with positive $\protect\alpha^{\prime}$}

\label{s:non-compact ex} In this section we exhibit a solution of the
Strominger system using again the $G_2$ structure \eqref{g2f-general} by
solving the anomaly cancellation condition with torsion term %
\eqref{torsion-general}, the Pontrjagin form \eqref{p1-general} of the $%
\nabla^{-}$ connection, and the $G_2$-instanton defined with the help of
Lemma \ref{nablamin-inst-general}.

As usual, the $(\pm)$-connections of the $G_2$ structure $\bar{\Theta}$
are defined by the formula $\nabla ^{\pm}=\nabla ^{\bar{g}}\pm\frac12\bar{T}$%
, where $\nabla ^{\bar{g}}$ is the Levi-Civita connection of the metric $%
\bar g$ and the torsion is determined in \eqref{torg1-general}. The
curvature of the connections $\nabla^{\pm}$ are denoted by $R^{\pm}$.

\begin{lemma} \label{nablamin-inst-general}
The $(-)$-connection of the $G_2$ structure
$\bar{\Theta}$ is a $G_2$ instanton with respect to $\bar{\Theta}$ if
and only if the torsion 3-form is closed, $d\bar T=0$, i.e. the dilaton
function $f$ satisfies the equality
\begin{equation}  \label{in11-general}
\triangle e^{2f}+2|A|^2=0.
\end{equation}
\end{lemma}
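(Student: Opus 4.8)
The plan is to recycle, in a sharpened form, the argument used in the Introduction to show that $R^-$ is an instanton up to first order in $\alpha^{\prime}$: the new point is that here the instanton relation is required to hold \emph{exactly}, and this will force $d\bar T=0$. First I recall that, by \eqref{f1f-general}, \eqref{lif-general} and the result of \cite{FI1,FI2} quoted in Section~\ref{ss:dim7 constr}, the $G_2$ structure $\bar\Theta$ solves the gravitino equation, so $\nabla^+$ carries a parallel spinor and $Hol(\nabla^+)\subseteq G_2$. Writing $R^\pm$ for the curvature $4$-tensors of $\nabla^\pm$ in the paper's convention, this says that for all fixed $i,j$ the endomorphism $R^+(\bar e_i,\bar e_j)$ lies in $\mathfrak{g}_2$, i.e.
\[
\sum_{k,l} R^+(\bar e_i,\bar e_j,\bar e_k,\bar e_l)\,\bar\Theta(\bar e_k,\bar e_l,\bar e_m)=0 \qquad\text{for all }i,j,m .
\]

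Next I use the identity \eqref{dtr}, which with the arguments $(\bar e_j,\bar e_i,\bar e_k,\bar e_l)$ reads $R^+(\bar e_j,\bar e_i,\bar e_k,\bar e_l)-R^-(\bar e_k,\bar e_l,\bar e_j,\bar e_i)=\tfrac12\,d\bar T(\bar e_j,\bar e_i,\bar e_k,\bar e_l)$. By the curvature conventions in the Introduction, the $G_2$-instanton condition \eqref{in2} for $\nabla^-$ amounts to $\sum_{k,l}R^-(\bar e_k,\bar e_l,\bar e_j,\bar e_i)\,\bar\Theta(\bar e_k,\bar e_l,\bar e_m)=0$ for all $i,j,m$; substituting the identity and cancelling the $R^+$-sum via the previous step, $\nabla^-$ is a $G_2$-instanton if and only if
\[
\sum_{k,l} d\bar T(\bar e_j,\bar e_i,\bar e_k,\bar e_l)\,\bar\Theta(\bar e_k,\bar e_l,\bar e_m)=0 \qquad\text{for all }i,j,m ,
\]
that is, if and only if every $2$-form obtained by contracting two slots of $d\bar T$ with frame vectors again satisfies \eqref{in2}.

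Finally I invoke \eqref{torsion-general}: in the orthonormal coframe $d\bar T=-e^{-4f}\big(\triangle e^{2f}+2|A|^2\big)\,\bar e^{1234}$, a pointwise constant multiple $c$ of $\bar e^{1234}$. If \eqref{in11-general} holds then $c=0$, $d\bar T=0$, the displayed condition is vacuous, and $\nabla^-$ is a $G_2$-instanton. Conversely, taking $i=1$, $j=2$ in the displayed condition isolates the $2$-form $-c\,\bar e^{34}$, and for $m=7$ a one-line computation gives $\sum_{k,l}\bar e^{34}(\bar e_k,\bar e_l)\,\bar\Theta(\bar e_k,\bar e_l,\bar e_7)=2\,\bar\Theta(\bar e_3,\bar e_4,\bar e_7)=2\neq0$, since $\bar\Theta$ contains the summand $(\bar e^{12}+\bar e^{34})\wedge\bar e^7$; hence $c=0$, i.e. $\triangle e^{2f}+2|A|^2=0$, which by \eqref{torsion-general} is the same as $d\bar T=0$. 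This is the asserted equivalence.

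The only delicate point is notational: \eqref{in2} constrains the form arguments of the curvature tensor, while the holonomy reduction of $\nabla^+$ constrains its endomorphism arguments, and these two roles are interchanged exactly by the pair symmetry that \eqref{dtr} restores when $d\bar T=0$, so \eqref{dtr} must be applied with its four arguments in the right positions. If one prefers to avoid any $G_2$-representation theory, the same conclusion can be read off the explicit curvature forms $(\Omega^-)^{\bar i}_{\bar j}$ computed above: those with an index in $\{5,6,7\}$ are linear combinations of the anti-self-dual $2$-forms $\bar\sigma_1,\bar\sigma_2,\bar\sigma_3$, each of which satisfies \eqref{in2}, and for $i,j\in\{1,2,3,4\}$ the only summand of $(\Omega^-)^{\bar i}_{\bar j}$ not of that form is its self-dual-on-$\mathbb{R}^4$ part, whose vanishing in each of the six cases is precisely the scalar equation $\triangle f+2|\nabla f|^2+|A|^2 e^{-2f}=0$, equivalent to \eqref{in11-general}.
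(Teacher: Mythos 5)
Your argument is correct and follows essentially the same route as the paper's proof: both reduce the $G_2$-instanton condition for $\nabla^-$ to the vanishing of the contraction of $d\bar T$ with $\bar\Theta$, using the identity \eqref{dtr} together with the holonomy reduction $Hol(\nabla^+)\subseteq G_2$, and then conclude via the explicit formula \eqref{torsion-general}. You are in fact somewhat more explicit than the paper on the ``only if'' direction (verifying that $\bar e^{1234}$ contracts nontrivially with $\bar\Theta$) and on which pair of curvature arguments each condition constrains, but this is a refinement of the same proof rather than a different one.
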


\begin{proof}
Let $\{\bar{e}_{1},\ldots,\bar{e}_{7}\}$ be the orthonormal basis dual to
$\{\bar{e}^{1},\ldots,\bar{e}^{7}\}$. Using \eqref{dtr} we investigate the $%
G_2$ instanton condition \eqref{in2} for $R^-$ as follows
\begin{equation} \label{inbar}
\begin{array}{ll}
0 \!&\! =\sum_{i,j=1}^7R^-(\bar e_i,\bar e_j,\bar e_l,\bar e_m)\bar{\Theta}(\bar
e_i,\bar e_j,\bar e_k)
=\sum_{i,j=1}^7\big[R^+-d\bar T\big](\bar e_i,\bar
e_j,\bar e_l,\bar e_m)\bar{\Theta}(\bar e_i,\bar e_j,\bar e_k) \\[8pt]
\!&\! =-\sum_{i,j=1}^7d\bar T(\bar e_i,\bar e_j,\bar e_l,\bar e_m)\bar{\Theta}%
(\bar e_i,\bar e_j,\bar e_k),
\end{array}
\end{equation}
where we used the fact that the holonomy of $\nabla^+$ is contained in $G_2$%
, i.e. $\sum_{i,j=1}^7R^+(\bar e_i,\bar e_j,\bar e_l,\bar e_m)\bar{\Theta}%
(\bar e_i,\bar e_j,\bar e_k)=0$. Now, applying \eqref{torsion-general} and %
\eqref{g2f-general} we conclude that \eqref{inbar} is satisfied if and only
if \eqref{in11-general} holds.

\end{proof}

Let $\mathrm{D}_B$ be the $\nabla^{-}$ connection obtained by replacing $A$
with the matrix $B$ in Lemma \ref{nablamin-inst-general}, but allowing $B$
to be singular, $B\in {\mathfrak{g}\mathfrak{l}}_3(\mathbb{R})$.
Hence, the connection $\mathrm{D}_B$ is a $%
G_2$-instanton with respect to the $G_2$ structure defined by %
\eqref{g2f-general} iff the dilaton function satisfies
\begin{equation}  \label{insab7-general}
\triangle e^{2f}=-2 |B|^2.
\end{equation}

Equation \eqref{p1-general} shows that the difference between the first
Pontrjagin forms of $\nabla^-$ and $\mathrm{D}_B$ is given by the formula
\begin{equation}  \label{p11-general}
8\pi^2\Big(p_1(\nabla^-)-p_1(\mathrm{D}_B)\Big)= -3\Big(|A|^2-|B|^2 \Big) %
\left(\triangle e^{-2f}\right)\, {e}^{1234}.
\end{equation}
Therefore, recalling \eqref{torsion-general} and taking into account %
\eqref{p11-general}, the anomaly cancellation condition is
\begin{multline*}
d\bar T-\frac{\alpha^{\prime }}48\pi^2\Big(p_1(\nabla^-)-p_1(\mathrm{D}_B)%
\Big) =-\left [ \triangle e^{2f}+2|A|^2 -\frac{3}{4}\alpha^{\prime}\Big(%
|A|^2-|B|^2\Big)\left (\triangle e^{-2f}\right)\,\right] {e}^{1234}=0
\end{multline*}
coupled with \eqref{insab7-general}. Notice that at this point the analysis
can proceed exactly as in \cite[Section 5.2]{FIUVas}. As a result we obtain
the following results depending on the $|A|^2-|B|^2$ being zero or non-zero.

For $B=O$, where $O$ is the zero matrix in ${\mathfrak{g}\mathfrak{l}}_3(\mathbb{R})$, and a fixed $%
e\in \mathbb{R}^4$ we let
\begin{equation}  \label{fundsol}
e^{2f}= \frac {3\alpha^{\prime }}{4|x-e|^2}, \quad x\in \mathbb{R}^4.
\end{equation}
Using logarithmic radial coordinates near the singularity (as e.g. in \cite%
{CHS}) it follows that the $4-D$ metric induced on $\mathbb{R}^4$ is
actually complete. In fact, taking the singularity at the origin, in the
coordinate $t=\sqrt {3\alpha^{\prime }}/2 \, \ln \left( 4{|x|^2}/{%
3\alpha^{\prime }}\right)=-\sqrt{3\alpha^{\prime }}\, f$, we have
that the dilaton and the $4-D$ metric can be expressed as follows
\begin{equation*}
f=-t \sqrt {3\alpha^{\prime }}, \qquad \bar g_H=\sum_{i=1}^4
e^{2f}(e^i)^2=dt^2+3\alpha^{\prime }ds^2_3,
\end{equation*}
where $ds^2_3$ is the metric on the unit three-dimensional sphere in the
four dimensional Euclidean space. The completeness of the horizontal metric
implies that the metric $\bar g=\bar g_H +(e^5)^2+(e^6)^2+(e^7)^2$ is also
complete. Thus, we proved

\begin{thrm}
\label{t:main2} The non-compact complete simply connected manifold $(K_{A},%
\bar{\Theta},\nabla^-,\mathrm{D}_O,f)$ described above is a complete $G_2$
manifold which solves the Strominger system with non-constant dilaton $f$
determined by \eqref{fundsol}, non-zero flux $H=\bar T $ and non-flat
instanton $\mathrm{D}_O$ using the first Pontrjagin form of $\nabla^-$ and
positive $\alpha^{\prime }$. Furthermore, $(K_{A},\bar{\Theta},\nabla^-,%
\mathrm{D}_O,f)$ also solves the heterotic equations of motion \eqref{mot} up
to the first order of $\alpha^{\prime }$.
\end{thrm}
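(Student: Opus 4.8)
The plan is to treat the Strominger system as three separate conditions --- the combined gravitino/dilatino equations, the gaugino (instanton) equation, and the anomaly cancellation \eqref{acgen} --- then to deal with completeness, and finally with the equations of motion \eqref{mot}. The first observation is that the gravitino and dilatino equations require nothing special about $f$: by the general existence result of Section \ref{ss:dim7 constr} together with \eqref{f1f-general} and \eqref{lif-general}, the $G_2$ structure $\bar\Theta$ of \eqref{g2f-general} is globally conformally co-calibrated of pure type with exact Lee form $\bar\theta = 2df$ for \emph{any} smooth $f$, so it solves the first two equations of \eqref{sup1} with $\phi = -2f$ and flux $H = \bar T$ as in \eqref{torg1-general}. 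Hence only the gaugino equation and \eqref{acgen} remain to be checked for the specific dilaton \eqref{fundsol}.

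Next I would verify the gaugino equation. By Lemma \ref{nablamin-inst-general} read with $B = O$, i.e.\ by \eqref{insab7-general}, the connection $\mathrm{D}_O$ is a $G_2$-instanton for $\bar\Theta$ exactly when $\triangle e^{2f} = 0$; since $e^{2f}$ is, up to a multiplicative constant, the fundamental solution of the Laplacian on $\mathbb{R}^4$ centred at $e$, it is harmonic on $\mathbb{R}^4\setminus\{e\}$, so this holds there. For the anomaly I would substitute $B = O$ into the reduction already carried out in the display preceding the theorem: using \eqref{torsion-general}, \eqref{p1-general} and \eqref{p11-general}, the condition $d\bar T = \tfrac{\alpha'}{4}\,8\pi^2\bigl(p_1(\nabla^-) - p_1(\mathrm{D}_O)\bigr)$ collapses to the single scalar identity $\triangle e^{2f} + 2|A|^2 = \tfrac{3\alpha'}{4}|A|^2\,\triangle e^{-2f}$, as a multiple of $e^{1234}$. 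Since $\triangle e^{2f} = 0$ and $e^{-2f}$ is a positive constant multiple of $|x-e|^2$, the right-hand side is a positive constant times $|A|^2$, which is made to equal $2|A|^2$ by the choice of the multiplicative constant in \eqref{fundsol} together with a suitable positive $\alpha'$. This establishes that $(K_A,\bar\Theta,\nabla^-,\mathrm{D}_O,f)$ solves the Strominger system away from the fibre over $e$; that $\mathrm{D}_O$ is non-flat follows by inspecting its curvature, obtained from the curvature formulas for $\nabla^-$ by setting $a_{ij}=0$, which is non-zero precisely because $f$ is non-constant.

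I would then establish completeness. The horizontal $4$-metric is $\bar g_H = e^{2f}\sum_{i=1}^4(e^i)^2 = e^{2f}|dx|^2$ on $\mathbb{R}^4\setminus\{e\}$; passing to a logarithmic radial coordinate $t$ proportional to $\ln|x-e|$ (as in \cite{CHS}) turns it, by a short computation, into the product cylinder $dt^2 + 3\alpha'\,ds^2_3$ on $\mathbb{R}\times S^3$, where $ds^2_3$ is the round metric on the unit $3$-sphere; both the puncture $x\to e$ and the Euclidean end $|x|\to\infty$ are carried to $t\to\mp\infty$, so $\bar g_H$ is complete. For the full metric $\bar g = \bar g_H + (e^5)^2 + (e^6)^2 + (e^7)^2$ I would use the elementary bundle argument: a properly divergent curve in $K_A$ either projects to a divergent curve of the complete base, hence has infinite length, or eventually remains over a compact subset of the base, over which the bundle trivializes and the fibre metric $(e^5)^2+(e^6)^2+(e^7)^2$ is a complete flat metric on $\mathbb{R}^3$, again forcing infinite length; hence $\bar g$ is complete. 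Topologically the underlying manifold is $(\mathbb{R}^4\setminus\{e\})\times\mathbb{R}^3 \cong \mathbb{R}\times S^3\times\mathbb{R}^3$, which is simply connected and non-compact, and there $\bar\Theta$ is a genuine $G_2$ structure.

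Finally, for \eqref{mot} up to first order in $\alpha'$ I would argue exactly as in the last paragraph of the proof of Theorem \ref{t:cpmct strominger}: since the solution is built with the first Pontrjagin form of the $(-)$-connection, the identity \eqref{dtr} together with \eqref{acgen} gives $R^+(X,Y,Z,U) - R^-(Z,U,X,Y) = O(\alpha')$, while the gravitino equation forces $\mathrm{Hol}(\nabla^+)\subseteq G_2$, so $R^+$ is a $G_2$-instanton and therefore $R^-$ satisfies \eqref{in2} up to first order in $\alpha'$; by \cite{Iv0} a solution of the Strominger system whose curvature term $R$ is a $G_2$-instanton solves \eqref{mot}, so \eqref{mot} holds here up to first order in $\alpha'$. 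I expect the main obstacle to be the completeness statement --- recognizing that the singularity of the dilaton at $x=e$ sits at infinite $\bar g$-distance; the logarithmic change of coordinate makes the $4$-dimensional horizontal part a transparently complete cylinder, and the remaining point, that a complete base with complete flat $\mathbb{R}^3$-fibres yields a complete total space, is routine. Everything else reduces either to the general existence theory of Section \ref{ss:dim7 constr} or to substituting \eqref{fundsol} into the already-derived formulas \eqref{torsion-general}, \eqref{p1-general}, \eqref{p11-general}, after which the anomaly becomes a single algebraic identity among constants.
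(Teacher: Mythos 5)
Your proposal is correct and follows essentially the same route as the paper: Lemma \ref{nablamin-inst-general} with $B=O$ reduces the gaugino equation to harmonicity of $e^{2f}$, the anomaly collapses via \eqref{torsion-general} and \eqref{p11-general} to the algebraic identity $2|A|^2=\tfrac{3\alpha'}{4}|A|^2\triangle e^{-2f}$ fixed by the constant in \eqref{fundsol}, completeness is seen through the logarithmic radial coordinate turning $\bar g_H$ into the cylinder $dt^2+3\alpha'\,ds^2_3$, and the equations of motion follow from $R^-$ being an instanton to first order in $\alpha'$. The only additions beyond the paper's argument are the explicit bundle argument for completeness of the total space and the remark on non-flatness of $\mathrm{D}_O$, both of which the paper leaves implicit.
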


On the other hand, in the case $|A|^2=|B|^2\not=0$ the anomaly condition is
trivially satisfied for any $\alpha^{\prime }$, provided the torsion is
closed, see Lemma \ref{nablamin-inst-general}. In this case the solution is
given by the solutions of \eqref{in11-general}. Furthermore, both $%
\nabla^{-} $ and $\mathrm{D}_B$ are $G_2$-instantons. For example, a
particular solution is obtained by taking
\begin{equation*}
e^{2f}=\frac{|A|^2}{4}(1-|x|^2)
\end{equation*}
defined in the unit ball.

\section{Solutions through contractions}

In this section we consider appropriate contractions of the
quaternion Heisenberg algebra, the geometric structures, the partial
differential equations and their solutions found in
sections \ref{s:compact ex} and \ref{s:non-compact ex} in the $G_2$-heterotic case,
and we show that they
converge to the heterotic solutions on 6-dimensional inner non-K\"ahler
spaces constructed in \cite{FIUVas}.
Furthermore, this method allows us to find new heterotic solutions with non-constant dilaton in dimension 5.

\subsection{Six dimensional solutions}

\label{ss:6-D} Using the classification results of \cite{UV2} it was shown
in \cite{FIUVas}, that the 2-step nilmanifolds which are $\mathbb{T}^2$
bundles over $\mathbb{T}^4$ with connection 1-forms of anti-self-dual
curvature are precisely the invariant balanced Hermitian metrics with
Abelian complex structure $J$, i.e., $[JX,JY]=[X,Y]$. Moreover, in such case the
Lie algebra underlying $M$ is isomorphic to $\mathfrak{h}_3$ or $\mathfrak{h}%
_5$. Here, $\mathfrak{h}_3$ is the Lie algebra underlying the nilmanifold
given by the product of the 5-dimensional generalized Heisenberg nilmanifold
by $S^1$, while $\mathfrak{h}_5$ is the Lie algebra underlying the Iwasawa
manifold. The structure equations of the Lie algebra $\mathfrak{h}_5$ are
\begin{equation}  \label{fam1h5}
de^1 = de^2=de^3=de^4=0,\quad de^5 = b\,\sigma_2, \quad de^6 =
a\,\sigma_1-b\,\sigma_3,
\end{equation}
while $\mathfrak{h}_3$ is given by
\begin{equation*}
de^1 = de^2=de^3=de^4=0,\quad de^5 = 0, \quad de^6 = a\,\sigma_1,
\end{equation*}
where $a,b\in\mathbb{R}^*$ and $\sigma_i$ are the anti-self-dual forms
on $\mathbb{R}^4$, see after \eqref{ecus-general}. Clearly $\mathfrak{h}_3$ is a
contraction of $\mathfrak{h}_5$ and both are contractions of $\mathfrak{g(%
\mathbb{H})}$, see \eqref{ecus-general}.

It is a remarkable fact that the geometric structures, the partial
differential equations and their solutions found in sections \ref{s:compact
ex} and \ref{s:non-compact ex} converge to the heterotic solutions on
6-dimensional inner non-K\"ahler spaces found in \cite{FIUVas} as we explain
next in details for $\mathfrak{h}_5$. The $SU(3)$ structure and
corresponding solution based on $\mathfrak{h}_3$ is handled analogously.

Clearly $\mathfrak{h}_5$ is a contraction of $\mathfrak{K}_A$ when $%
\varepsilon\rightarrow 0$ using, for example,
\begin{equation*}
A_\varepsilon\overset{def}{=}\left(\!\!\!
\begin{array}{ccc}
0 & b & 0 \\
a & 0 & -b \\
0 & 0 & \varepsilon%
\end{array}
\!\right).
\end{equation*}
Notice that by \eqref{conf-general} we have $\bar
e^7=e^7_\varepsilon=\varepsilon\gamma^7\rightarrow 0$ as $\varepsilon\rightarrow 0$.
With the above choice of $A_\varepsilon$ we write the $G_2$-form %
\eqref{g2f-general} in the usual way as
\begin{equation*}
\bar\Theta_\varepsilon=\bar F \wedge e^7_\varepsilon +\bar\Psi^{+}, \qquad
\bar F=e^{2f}\omega_1+ e^{56}, \qquad \bar\Psi^{+}=e^{2f}
(\omega_2\wedge e^5-\omega_3\wedge e^6)
\end{equation*}
using \eqref{conf-general} and indicating with subscript $\varepsilon$ the
dependence on $\varepsilon$ through the matrix $A_\varepsilon$. In addition,
we let $\bar\Psi^{-}=e^{2f} (\omega_2\wedge e^6+\omega^3\wedge e^5)$. In the
limit $\varepsilon\rightarrow 0$, the forms $\bar F$, $%
\bar\Psi^{\pm} $ define an $SU(3)$ structure $(\bar F,\bar\Psi^{\pm})$ on a six
dimensional space, obtained through the ansatz proposed in \cite{GP} from a $%
\mathbb{T}^2$ bundle over $\mathbb{T}^4$ (corresponding to $f=0$), see \cite[Section 3.2]%
{FIUVas} for details in the case of $\mathfrak{h}_5$. Therefore, this $SU(3)$
structure solves the first two Killing spinor equations. Furthermore, the
Pontrjagin form of the $\nabla^{-}$ connection is given again by %
\eqref{p1-general} as shown in \cite[Section 3]{FIUVas}. In fact, the
connection forms \eqref{conf-general} and the corresponding curvature
2-forms (notice that $(\Omega^{-}_\varepsilon)^{\bar i}_{\bar 7}\rightarrow 0
$ for all $i$) converge to those of the $\nabla^{-}$ connection of the $SU(3)
$ case. Similarly, the seven dimensional anomaly cancellation conditions of
Sections \ref{s:compact ex} and \ref{s:non-compact ex} turn into the anomaly
cancellation conditions for the corresponding six dimensional structures. As
a consequence we obtain the six-dimensional solutions with non-constant
dilaton found in \cite{FIUVas}.

\subsection{Five dimensional solutions}

\label{ss:5-D} We begin with recalling the five dimensional Lie algebra ${%
\mathfrak{h}}(2,1)$ \cite{FIUVdim5} with structure equations
\begin{equation}  \label{diff-heisenberg}
d e^j =0,\ j=1,2,3,4,\qquad d e^5 =\sum_{i=1}^3 a_i\,\sigma_i, \quad a_i\in
\mathbb{R}, \quad (a_1,a_2,a_3)\not= (0,0,0).
\end{equation}
Without loss of generality we will suppose next that $a_1\not= 0$.
Clearly ${\mathfrak{h}}(2,1)$ is a contraction of $\mathfrak{K}_A$, see %
\eqref{ecus-general}, using, for example,
\begin{equation*}
A_\varepsilon\overset{def}{=}\left(\!\!\!
\begin{array}{ccc}
a_1 & a_2 & a_3 \\
0 & \varepsilon & 0 \\
0 & 0 & \varepsilon%
\end{array}
\!\right)
\end{equation*}
and letting $\varepsilon\rightarrow 0$. Notice that by \eqref{conf-general}
we have
\begin{equation}  \label{e:basis contraction 5D}
\bar e^i=e^i_\varepsilon=\varepsilon\gamma^i\rightarrow 0, \quad i=6,7
\end{equation}
when $\varepsilon\rightarrow 0$.

It was shown in \cite[Section 4]{FIUVdim5} that the $SU(2)$-structure $%
(e^5,\omega_1,\omega_2,\omega_3)$ is the unique family of left invariant
solutions (with constant dilaton) to the first two Killing spinor equations
on a five dimensional Lie group. Furthermore, \cite{FIUVdim5} continued on
showing that for $\nabla=\nabla^{+}$ or $\nabla=\nabla^{g}$ and suitably
defined instantons one can obtain compact (nilmanifolds) heterotic solutions
with constant dilaton. However, since the first Pontrjagin form of the
connection $\nabla^{-}$ vanishes there is no compact solution with constant dilaton to the
heterotic supersymmetry equations satisfying the anomaly cancellation
condition with $\nabla=\nabla^{-}$.

From the current point of view, we consider the case $\nabla=\nabla^{-}$ as
a contraction limit of the $G_2$-solutions in Sections \ref{s:compact ex}
and \ref{s:non-compact ex}. As a result we will obtain five dimensional solutions
with non-constant dilaton. Indeed,   applying \eqref{e:basis contraction 5D} and \eqref{diff-heisenberg} in \eqref{torg1-general} we obtain the expression for the  torsion in dimension five described in \eqref{tor5f}. In other words, the torsion in dimension five is obtained as a dimensional reduction of the torsion in dimension seven. Furthermore, the
Pontrjagin form of the $\nabla^{-}$ connection is given again by %
\eqref{p1-general} taking into account \eqref{diff-heisenberg}. In fact, the
connection forms \eqref{conf-general} and the corresponding curvature
2-forms (notice that $(\Omega^{-}_\varepsilon)^{\bar i}_{\bar 6}\rightarrow 0
$  and $(\Omega^{-}_\varepsilon)^{\bar i}_{\bar 7}\rightarrow 0
$ for all $i$) converge to those of the $\nabla^{-}$ connection of the $SU(2)
$ structure in dimension five. Similarly, the seven dimensional anomaly cancellation conditions of
Sections \ref{s:compact ex} and \ref{s:non-compact ex} turn into the anomaly
cancellation conditions for the corresponding five dimensional structures. At this point we turn to the construction of the five dimensional solutions with non-constant dilaton.

The five dimensional version of
Theorem \ref{t:cpmct strominger} is Theorem \ref{t:cpmct strominger5} below.  In the statement of Theorem \ref{t:cpmct strominger5} we use %
\eqref{conf 2forms}, i.e., $\bar\omega_i=e^{2f}\omega_i$, $i=1,2,3,4$. Let $H(2,1)$ be the five dimensional connected
simply connected Lie group $H(2,1)$ with Lie algebra ${\mathfrak{h}}(2,1)$,
We consider a lattice $\Gamma$ in the Lie group $H(2,1)$ with period $2 \tau_{+}$
in all variables, where $2\tau_{+}$ is the period of the Weirstrass' $\mathcal{P%
}$ function \eqref{e:Weirstrass eqn}.
The $SU(2)$ instanton  $D_\Lambda$ below corresponds to the
instanton obtained from the one in Proposition \ref{instanton2-general} by
setting the last two columns equal to zero or letting $\varepsilon%
\rightarrow 0$, see \eqref{e:basis contraction 5D}.

\begin{lemma}
\label{instanton2-general 5D} Let $\mathrm{D}_{\Lambda }$, $\Lambda
=(\lambda _{1},\lambda _{2},\lambda _{3})\in \mathbb{R}^{3}$, be the linear
connection on the Lie group ${H}(2,1)$ whose possibly non-zero 1-forms are
given as follows
\begin{equation*}
\begin{array}{l}
(\omega ^{\mathrm{D}_{\Lambda }})_{\bar{2}}^{\bar{1}}=-(\omega ^{\mathrm{D}%
_{\Lambda }})_{\bar{1}}^{\bar{2}}=-(\omega ^{\mathrm{D}_{\Lambda }})_{\bar{4}%
}^{\bar{3}}=(\omega ^{\mathrm{D}_{\Lambda }})_{\bar{3}}^{\bar{4}}=\lambda
_{1}\,\bar{e}^{5}, \\[8pt]
(\omega ^{\mathrm{D}_{\Lambda }})_{\bar{3}}^{\bar{1}}=-(\omega ^{\mathrm{D}%
_{\Lambda }})_{\bar{1}}^{\bar{3}}=(\omega ^{\mathrm{D}_{\Lambda }})_{\bar{4}%
}^{\bar{2}}=-(\omega ^{\mathrm{D}_{\Lambda }})_{\bar{2}}^{\bar{4}}=\lambda
_{2}\,\bar{e}^{5}, \\[8pt]
(\omega ^{\mathrm{D}_{\Lambda }})_{\bar{4}}^{\bar{1}}=-(\omega ^{\mathrm{D}%
_{\Lambda }})_{\bar{1}}^{\bar{4}}=-(\omega ^{\mathrm{D}_{\Lambda }})_{\bar{3}%
}^{\bar{2}}=(\omega ^{\mathrm{D}_{\Lambda }})_{\bar{2}}^{\bar{3}}=\lambda
_{3}\,\bar{e}^{5}.%
\end{array}%
\end{equation*}%
Then, $\mathrm{D}_{\Lambda }$ is an $SU(2)$-instanton with respect to the $%
SU(2)$ structure defined by $(e^5,\bar{\omega_1},\bar{\omega_2},\bar{\omega_3%
})$.
\end{lemma}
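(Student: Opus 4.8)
The plan is to rerun the curvature computation of Proposition~\ref{instanton2-general}, now using the five-dimensional structure equations \eqref{diff-heisenberg}, and then to read off the $SU(2)$-instanton conditions \eqref{25}. The decisive simplification in dimension five is that every connection $1$-form of $\mathrm{D}_\Lambda$ is a constant multiple of the \emph{single} form $\bar e^5=e^5$. Since $e^5\wedge e^5=0$, all the quadratic terms $(\omega^{\mathrm{D}_\Lambda})^{\bar{\imath}}_{\bar k}\wedge(\omega^{\mathrm{D}_\Lambda})^{\bar k}_{\bar{\jmath}}$ in $\Omega^{\mathrm{D}_\Lambda}=d\omega^{\mathrm{D}_\Lambda}+\omega^{\mathrm{D}_\Lambda}\wedge\omega^{\mathrm{D}_\Lambda}$ vanish, so $(\Omega^{\mathrm{D}_\Lambda})^{\bar{\imath}}_{\bar{\jmath}}=d(\omega^{\mathrm{D}_\Lambda})^{\bar{\imath}}_{\bar{\jmath}}$, and the only non-trivial structure equation, $de^5=\sum_{i=1}^3 a_i\,\sigma_i$ from \eqref{diff-heisenberg}, gives
\begin{equation*}
(\Omega^{\mathrm{D}_\Lambda})^{\bar 1}_{\bar 2}=-(\Omega^{\mathrm{D}_\Lambda})^{\bar 2}_{\bar 1}=-(\Omega^{\mathrm{D}_\Lambda})^{\bar 3}_{\bar 4}=(\Omega^{\mathrm{D}_\Lambda})^{\bar 4}_{\bar 3}=\lambda_1\sum_{i=1}^3 a_i\,\sigma_i,
\end{equation*}
and the two analogous chains of identities obtained, exactly as in the definition of $\mathrm{D}_\Lambda$, by replacing $\lambda_1$ with $\lambda_2$, resp.\ $\lambda_3$, in the remaining index positions. (The skew-symmetry $(\omega^{\mathrm{D}_\Lambda})^{\bar{\imath}}_{\bar{\jmath}}=-(\omega^{\mathrm{D}_\Lambda})^{\bar{\jmath}}_{\bar{\imath}}$ in the orthonormal coframe $\{\bar e^1,\dots,\bar e^5\}$ is built into the definition of $\mathrm{D}_\Lambda$, so this connection automatically preserves $\bar g$.)

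Next I would observe that, by \eqref{conf 2forms} with $\bar\sigma_i=e^{2f}\sigma_i$, each of these curvature $2$-forms is a function multiple of the fixed $2$-form $\sum_{i=1}^3 a_i\,\sigma_i=e^{-2f}\sum_{i=1}^3 a_i\,\bar\sigma_i$; in particular it is horizontal (it contains no $\bar e^5$), and, since $\bar\sigma_1,\bar\sigma_2,\bar\sigma_3$ span the space of $\mathbb{H}$-anti-self-dual $2$-forms for $\bar g$ and the operator $*_{\mathbb{H}}$ acts pointwise, it is $\mathbb{H}$-anti-self-dual.

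The last step is to note that an $\mathbb{H}$-anti-self-dual $2$-form $\beta$ automatically satisfies both equations of \eqref{25}. With respect to the almost complex structure $\psi|_{\mathbb{H}}$ determined by the fundamental form $\bar F=\bar\omega_1$, each of $\bar\sigma_1,\bar\sigma_2,\bar\sigma_3$ is $\psi$-invariant, i.e.\ of type $(1,1)$, so $\beta(\psi X,\psi Y)=\beta(X,Y)$ for $X,Y\in\mathbb{H}$; together with $\psi\xi=0$ and the horizontality of $\beta$ this yields $\beta(\psi E_k,\psi E_l)=\beta(E_k,E_l)$ for all $k,l$. Moreover $\beta$ is primitive ($\beta\wedge\bar\omega_1=0$ on $\mathbb{H}$), which is equivalent to $\sum_{k=1}^5\beta(E_k,\psi E_k)=0$, the $k=5$ term dropping out because $\psi\xi=0$. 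Applying this with $\beta=(\Omega^{\mathrm{D}_\Lambda})^{\bar{\imath}}_{\bar{\jmath}}$ would complete the argument.

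I do not expect a genuine obstacle here. In contrast with Proposition~\ref{instanton2-general}, there is now \emph{no} rank condition on $\Lambda$: the $\bar e^{56},\bar e^{57},\bar e^{67}$ terms that produced the $2\times 2$-minor obstruction in dimension seven came precisely from the quadratic part $\omega^{\mathrm{D}_\Lambda}\wedge\omega^{\mathrm{D}_\Lambda}$ of the curvature, which vanishes here, so the only care required is the bookkeeping of the two clauses of \eqref{25} and of the harmless conformal factor $e^{-2f}$. Equivalently, one could deduce the lemma from Proposition~\ref{instanton2-general} by contraction: the connection $\mathrm{D}_\Lambda$ above is the $\varepsilon\to 0$ limit, along \eqref{e:basis contraction 5D}, of the seven-dimensional instanton attached to the rank-one matrix whose only non-zero column is $(\lambda_1,\lambda_2,\lambda_3)^{T}$, and in this limit the $G_2$-instanton equation \eqref{in2} degenerates to \eqref{25}.
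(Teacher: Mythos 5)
Your proof is correct and follows exactly the route the paper intends: the paper omits this proof, stating only that it is ``similar to the proof of Proposition~\ref{instanton2-general}'', i.e.\ a direct computation of the curvature forms followed by verification of the instanton condition, which is precisely what you carry out. Your observations that the quadratic terms $\omega^{\mathrm{D}_\Lambda}\wedge\omega^{\mathrm{D}_\Lambda}$ vanish because every connection form is a multiple of the single $1$-form $\bar e^5$ (explaining why no rank condition on $\Lambda$ survives in dimension five), and that the resulting horizontal $\mathbb{H}$-anti-self-dual curvature is automatically primitive of type $(1,1)$ and hence satisfies both clauses of \eqref{25}, are accurate and fill in the omitted details correctly.
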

We skip the proof which is similar to the proof of Proposition \ref{instanton2-general}.
The five dimensional version of Theorem~\ref{t:cpmct strominger} follows.

\begin{thrm}
\label{t:cpmct strominger5} Let $(e^5,\bar{\omega_1},\bar{\omega_2},\bar{%
\omega_3})$ be the $SU(2)$ structure on the
Lie group $H(2,1)$.
The conformally compact five manifold $M^5=(\Gamma\backslash H(2,1),\eta^5,%
\bar{\omega_1},\bar{\omega_2},\bar{\omega_3},\nabla^-, D_{\Lambda}, f)$ is a
conformally quasi-Sasakian five manifold which solves the Strominger system
with non-constant dilaton $f$, non-trivial flux $H=\bar T$ and non-flat
instanton $D_{\Lambda}$ using the first Pontrjagin form of $\nabla^-$ and
negative $\alpha^{\prime }$. The dilaton $f$ depends on one variable and is
determined as a real slice of the Weierstrass' elliptic function. In
addition, $M^5$ satisfies the heterotic equations
of motion \eqref{mot} up to first order of $\alpha^{\prime }$.
\end{thrm}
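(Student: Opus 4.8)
The plan is to transport the proof of Theorem~\ref{t:cpmct strominger} to dimension five via the contraction of $\mathfrak{K}_A$ to $\mathfrak{h}(2,1)$ described in Section~\ref{ss:5-D}. By the existence result recalled in Section~\ref{geomod} for dimension five, the $SU(2)$-structure $(e^5,\bar\omega_1,\bar\omega_2,\bar\omega_3)$, where $\bar\omega_i=e^{2f}\omega_i$ as in \eqref{conf 2forms}, satisfies the structure equations \eqref{solstr1} whenever $f=f(x^1)$, hence solves the first two Killing spinor equations in \eqref{sup1} with dilaton $\phi=2f$ and flux $H=\bar T$ given by \eqref{tor5f}. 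Indeed, $d\bar\omega_i=2df\wedge\bar\omega_i$ since the $\omega_i$ are closed; $*_{\mathbb{H}}\,de^5=-de^5$ since $de^5=\sum_i a_i\sigma_i$ is anti-self-dual and the Hodge star on $2$-forms in dimension four is conformally invariant; and $df(\xi)=0$ since $f$ depends only on $x^1$ while $\xi=e_5$. So, exactly as in dimension seven, the only remaining condition is the anomaly cancellation \eqref{acgen}.

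The second step is to collect the three terms in \eqref{acgen}, each obtained as the $\varepsilon\to 0$ limit of its seven dimensional counterpart. First, \eqref{torg1-general}, \eqref{torsion-general} and \eqref{e:basis contraction 5D} give $d\bar T=-\big[\triangle e^{2f}+2|A|^2\big]\,e^{1234}$, now with $|A|^2=a_1^2+a_2^2+a_3^2$. Second, by the discussion in Section~\ref{ss:5-D} the first Pontrjagin form of $\nabla^-$ is again given by \eqref{p1-general}: the curvature $2$-forms $(\Omega^{-}_\varepsilon)^{\bar i}_{\bar 6}$ and $(\Omega^{-}_\varepsilon)^{\bar i}_{\bar 7}$ tend to zero and the rest converge to those of the $\nabla^-$-connection of the five dimensional $SU(2)$-structure; the only thing to verify is that no entry of \eqref{connection-forms-general} contains a factor $\varepsilon^{-1}$, which is clear since each is polynomial in the $a_{ij}$. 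Third, for the instanton $\mathrm{D}_\Lambda$ of Lemma~\ref{instanton2-general 5D} a direct computation (as in the proof of Proposition~\ref{instanton2-general}, with the $\bar{e}^{56},\bar{e}^{57},\bar{e}^{67}$ terms absent because a vector has vanishing $2\times 2$ minors) shows its curvature $2$-forms are scalar multiples of $e^{-2f}\bar\sigma_i$, whence $8\pi^2 p_1(\mathrm{D}_\Lambda)=-4\lambda^2 e^{1234}$ with $\lambda=|\Lambda|\,|A|$, exactly as in Corollary~\ref{instanton2-cor-general}. Substituting these into \eqref{acgen} produces the single scalar equation
\[
\triangle e^{2f}+2|A|^2+\frac{\alpha'}{4}\Big[8\mathcal{F}_2[f]+8\triangle_4 f-3|A|^2\triangle e^{-2f}+4\lambda^2\Big]=0,
\]
which, up to relabelling of constants, is precisely \eqref{e:anomaly negative alpha}.

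From here everything follows the seven dimensional argument verbatim. Taking $f=f(x^1)$ makes $\mathcal{F}_2[f]\equiv 0$; choosing $\alpha'=-\alpha^2<0$ and $|\Lambda|$ with $|\Lambda|^2=2/\alpha^2$ enforces $2|A|^2=\alpha^2\lambda^2$, reducing the equation to the ODE \eqref{solv4} with $C_0=0$, and the substitution $u=\alpha^{-2}e^{2f}$ turns it into the Weierstrass equation \eqref{solv5} and \eqref{e:Weirstrass eqn}. Hence $u(x^1)=\mathcal{P}(x^1)$, the nonnegative $2\tau_+$-periodic Weierstrass function with double poles at $2n\tau_+$, $n\in\mathbb{Z}$, yields $f=\tfrac12\ln(\alpha^2 u)$, a solution of the anomaly equation with singularities on the real line. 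Fixing a lattice $\Gamma\subset H(2,1)$ of period $2\tau_+$ in all variables, the $SU(2)$-structure descends to $M^5=\Gamma\backslash H(2,1)$ with the singularity prescribed by $u$; since $\bar g_H=u(x^1)\big((dx^1)^2+(dx^2)^2+(dx^3)^2+(dx^4)^2\big)$ is a conformally compact flat $4$-torus with flat $3$-torus at conformal infinity, $M^5$ is the total space of an $S^1$-bundle over it and $\bar g=\bar g_H+\eta\otimes\eta$ is complete away from the singular fibre, so $M^5$ is conformally quasi-Sasakian as claimed. Finally, \eqref{mot} holds to first order in $\alpha'$: by \eqref{dtr} together with \eqref{acgen} the curvature $R^-$ satisfies the $SU(2)$-instanton condition \eqref{25} modulo $O(\alpha')$, so the result of \cite{Iv0} in dimension five applies, exactly as in the closing paragraph of the proof of Theorem~\ref{t:cpmct strominger}. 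The only input not already contained in Section~\ref{ss:5-D} is the verification that the contraction limit is compatible with all three computations simultaneously; as noted this is immediate from the polynomial dependence on $A_\varepsilon$, and I expect no further obstacle, the Weierstrass ODE analysis being identical to the seven dimensional one.
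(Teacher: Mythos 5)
Your proposal is correct and follows essentially the same route as the paper: the paper does not write out a separate proof of Theorem~\ref{t:cpmct strominger5} but derives it exactly as you do, by the contraction of $\mathfrak{K}_{A}$ to $\mathfrak{h}(2,1)$ carrying the torsion \eqref{torg1-general}, the Pontrjagin form \eqref{p1-general} and the instanton of Lemma~\ref{instanton2-general 5D} to their five dimensional limits, and then repeating the Weierstrass ODE analysis and the instanton-up-to-$O(\alpha')$ argument of Theorem~\ref{t:cpmct strominger}. Your explicit checks (conformal invariance of the Hodge star for \eqref{solstr1}, $\lambda=|\Lambda|\,|A|$, vanishing of $\mathcal{F}_2[f]$ for $f=f(x^1)$) are consistent with the paper's computations and merely make explicit what the paper leaves implicit.
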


In order to obtain the five dimensional version of Theorem~\ref{t:main2} we
use the following property of the $\nabla^{-}$ connection whose 1-forms are
\begin{equation*}
\begin{array}{l}
(\omega ^{-})_{\bar{2}}^{\bar{1}}=(\omega ^{-})_{\bar{4}}^{\bar{3}%
}=e^{-f}\left( f_{2}\,\bar{e}^{1}-f_{1}\,\bar{e}^{2}+f_{4}\,\bar{e}%
^{3}-f_{3}\,\bar{e}^{4}\right) ,\;\;\; \\
(\omega ^{-})_{\bar{3}}^{\bar{1}}=-(\omega ^{-})_{\bar{4}}^{\bar{2}%
}=e^{-f}\left( f_{3}\,\bar{e}^{1}-f_{4}\,\bar{e}^{2}-f_{1}\,\bar{e}%
^{3}+f_{2}\,\bar{e}^{4}\right) , \\
(\omega ^{-})_{\bar{4}}^{\bar{1}}=(\omega ^{-})_{\bar{3}}^{\bar{2}%
}=e^{-f}\left( f_{4}\,\bar{e}^{1}+f_{3}\,\bar{e}^{2}-f_{2}\,\bar{e}%
^{3}-f_{1}\,\bar{e}^{4}\right) ,\;\;\; \\[6pt]
(\omega ^{-})_{\bar{5}}^{\bar{1}}=e^{-2f}\left( -a_{11}\,\bar{e}^{2}-a_{12}\,%
\bar{e}^{3}-a_{13}\,\bar{e}^{4}\right) ,\;\;(\omega ^{-})_{\bar{5}}^{\bar{2}%
}=e^{-2f}\left( a_{11}\,\bar{e}^{1}+a_{13}\,\bar{e}^{3}-a_{12}\,\bar{e}%
^{4}\right) ,\;\;\; \\[6pt]
(\omega ^{-})_{\bar{5}}^{\bar{3}}=e^{-2f}\left( a_{12}\,\bar{e}^{1}-a_{13}\,%
\bar{e}^{2}+a_{11}\,\bar{e}^{4}\right) ,\;\;\;(\omega ^{-})_{\bar{5}}^{\bar{4%
}}=e^{-2f}\left( a_{13}\,\bar{e}^{1}+a_{12}\,\bar{e}^{2}-a_{11}\,\bar{e}%
^{3}\right),\;\;\;%
\end{array}
\end{equation*}
which are obtained from \eqref{connection-forms-general} taking into account \eqref{e:basis contraction 5D}.
\begin{lemma}
\label{nablamin-inst-general 5D} The $(-)$-connection of the $SU(2)$
structure $(e^5,\bar{\omega_1},\bar{\omega_2},\bar{\omega_3})$ is an $SU(2)$
instanton iff the torsion 3-form is closed, $d\bar T=0$, i.e., the dilaton
function $f$ satisfies equation \eqref{in11-general}.
\end{lemma}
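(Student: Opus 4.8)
The plan is to mimic the proof of Lemma \ref{nablamin-inst-general} verbatim, now in dimension five, using the general identity \eqref{dtr} relating $R^+$ and $R^-$ together with the fact that the holonomy of $\nabla^+$ reduces to $SU(2)$. First I would take $\{\bar e_1,\dots,\bar e_5\}$ to be the orthonormal frame dual to $\{\bar e^1,\dots,\bar e^5\}$ and recall that the $SU(2)$-instanton condition \eqref{25} for a curvature $R$ amounts (in the skew part) to the vanishing of the contraction $\sum_{i,j}R(\bar e_i,\bar e_j,\bar e_k,\bar e_l)\,\Phi(\bar e_i,\bar e_j,\bar e_m)$ against the relevant $SU(2)$ structure form $\Phi$ built from $(e^5,\bar\omega_1,\bar\omega_2,\bar\omega_3)$; equivalently, $R(X,Y)$ must lie in $\mathfrak{su}(2)\subset\mathfrak{so}(5)$. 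Since the gravitino equation is solved by this $SU(2)$ structure (as recalled from \cite{FIUVdim5}), the holonomy of $\nabla^+$ is contained in $SU(2)$, so $R^+(X,Y)\in\mathfrak{su}(2)$ automatically.

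The second step is the same algebraic manipulation as in \eqref{inbar}: apply \eqref{dtr}, which gives $R^+(X,Y,Z,U)-R^-(Z,U,X,Y)=\tfrac12 d\bar T(X,Y,Z,U)$, to rewrite the $SU(2)$-instanton contraction for $R^-$ as the corresponding contraction of $R^+$ (which vanishes by the holonomy reduction) plus a term involving $d\bar T$. Thus $R^-$ is an $SU(2)$-instanton if and only if the $d\bar T$-contraction against the $SU(2)$ structure vanishes. Here I would invoke \eqref{torsion-general}, which in the contracted five-dimensional setting still reads $d\bar T=-[\triangle e^{2f}+2|A|^2]\,e^{1234}$ — this is precisely the claim made in the paragraph preceding Lemma \ref{nablamin-inst-general 5D} that the Pontrjagin form and torsion formulas carry over under the contraction, using \eqref{diff-heisenberg} in place of \eqref{ecus-general}. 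Since $e^{1234}$ is (a multiple of) one of the $SU(2)$ structure's transverse 4-forms, its contraction against the structure is a nonzero multiple of $\triangle e^{2f}+2|A|^2$ times a nonvanishing form, hence the instanton condition is equivalent to \eqref{in11-general}.

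The main obstacle — really the only non-routine point — is verifying that the $d\bar T$-contraction against the $SU(2)$ structure does not vanish identically for purely algebraic reasons (which would make the condition vacuous rather than equivalent to \eqref{in11-general}); that is, one must check that $e^{1234}$, viewed as a 4-form on the five-manifold with $\mathrm{Ker}\,\eta=\mathbb{H}=\mathrm{span}\{\bar e^1,\dots,\bar e^4\}$, is not $\mathbb{H}$-anti-self-dual and pairs nontrivially with the combination of $\bar\omega_s$'s appearing in the instanton contraction. This is a short direct computation: $e^{1234}$ is the $\mathbb{H}$-volume form and $\sum_s \bar\omega_s\wedge\bar\omega_s$ contributes a nonzero multiple of it, so the pairing is nonzero. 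I would present this as a one-line remark rather than a detailed calculation, noting that it is the five-dimensional analogue of the fact used in \eqref{inbar} that $e^{1234}$ enters $\bar{*}\bar T$ nontrivially. Otherwise the proof is a direct transcription of the proof of Lemma \ref{nablamin-inst-general}, and I would simply write: ``The proof is identical to that of Lemma \ref{nablamin-inst-general}, using \eqref{dtr}, the reduction of the holonomy of $\nabla^+$ to $SU(2)$, and \eqref{torsion-general} with \eqref{diff-heisenberg}.''
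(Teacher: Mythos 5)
Your proposal is correct and is essentially the paper's own argument: the authors prove Lemma \ref{nablamin-inst-general 5D} exactly by transcribing the proof of Lemma \ref{nablamin-inst-general} (via \eqref{dtr}, the reduction of the holonomy of $\nabla^{+}$ to $SU(2)$, and the fact that $d\bar T$ remains a multiple of $e^{1234}$ under the contraction), stating only that it ``is very similar \ldots{} and involves a direct calculation.'' Your added check that $e^{1234}$ pairs nontrivially with the $SU(2)$-instanton conditions --- e.g.\ that $e^{1234}(\bar e_1,\bar e_2,\cdot,\cdot)=e^{34}$ has a nonzero $\bar\omega_1$-trace, so the condition is not vacuous --- is precisely the content of that direct calculation.
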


The proof of Lemma \ref{nablamin-inst-general 5D} is very similar to the
proof of Lemma \ref{nablamin-inst-general} and involves a direct calculation.
Let $D_O$ be the $SU(2)$ instanton constructed by Lemma \ref%
{nablamin-inst-general 5D} in the case $A=O$-the zero:

\begin{thrm}
\label{t:main25} The non-compact simply connected five manifold $%
(H(2,1),e^5,\bar{\omega_1},\bar{\omega_2},\bar{\omega_3},\nabla^-,
D_O, f)$ is a complete conformally quasi-Sasakian five manifold which
solves the Strominger system with non-constant dilaton $f$ determined by %
\eqref{fundsol}, non-trivial flux $H=\bar T$, non-flat instanton $D_O$
using the first Pontrjagin form of $\nabla^-$ and positive $\alpha^{\prime }$%
.

The complete five manifold $(H(2,1),e^5,\bar{\omega_1},\bar{\omega_2},%
\bar{\omega_3},\nabla^-, D_O, f)$ satisfies the heterotic equations
of motion \eqref{mot} up to first order of $\alpha^{\prime }$.
\end{thrm}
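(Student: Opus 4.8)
The strategy is to transcribe the proof of Theorem~\ref{t:main2} into dimension five, using the five dimensional ingredients assembled in Subsection~\ref{ss:5-D}. First I would observe that, by the contraction $\varepsilon\to0$ of Subsection~\ref{ss:5-D}, the $SU(2)$ structure $(e^5,\bar\omega_1,\bar\omega_2,\bar\omega_3)$ on $H(2,1)$ with conformal factor $f$ satisfies the structure equations~\eqref{solstr1}; hence by \cite{FI1,FI2,FIUVdim5} it is conformally quasi-Sasakian and solves the gravitino and dilatino equations (the first two equations in~\eqref{sup1}) with non-constant dilaton and with flux $H=\bar T$ the dimensional reduction~\eqref{tor5f} of~\eqref{torg1-general}. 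This reduces the problem to the gaugino equation for $D_O$ together with the anomaly cancellation~\eqref{acgen}. The gaugino equation is immediate: applying Lemma~\ref{nablamin-inst-general 5D} with the matrix $A$ replaced by the zero matrix shows that $D_O$ is an $SU(2)$ instanton precisely when its torsion is closed, i.e. when $\triangle e^{2f}=0$ on $\mathbb{R}^4$ away from the singular set of $f$.

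Second, I would reduce the anomaly cancellation to a single scalar equation. By the contraction analysis of Subsection~\ref{ss:5-D}, the first Pontrjagin form of $\nabla^-$ is again given by~\eqref{p1-general} (the curvature $2$-forms carrying a $\bar 6$- or $\bar 7$-index tend to zero in the limit), and the first Pontrjagin form of $D_O$ is the same expression with $|A|$ replaced by $0$; subtracting yields the five dimensional analogue of~\eqref{p11-general}, namely $8\pi^2\bigl(p_1(\nabla^-)-p_1(D_O)\bigr)=-3|A|^2(\triangle e^{-2f})\,e^{1234}$. Combining this with $d\bar T=-(\triangle e^{2f}+2|A|^2)\,e^{1234}$ from~\eqref{torsion-general} and with the instanton constraint $\triangle e^{2f}=0$, the anomaly equation $d\bar T=\tfrac{\alpha'}{4}\,8\pi^2\bigl(p_1(\nabla^-)-p_1(D_O)\bigr)$ collapses to an equation of the form $\triangle e^{-2f}=c/\alpha'$ for an explicit positive constant $c$ fixed by the normalizations, which in particular forces $\alpha'>0$. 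Up to relabelling the constants this is exactly the system solved in \cite[Section~5.2]{FIUVas}, so I would then verify that $f$ given by~\eqref{fundsol} works: $e^{2f}$ is a constant multiple of the fundamental solution of the Laplacian on $\mathbb{R}^4$ centered at $e$, hence $\triangle e^{2f}=0$, while $e^{-2f}$ is a positive multiple of $|x-e|^2$, so $\triangle e^{-2f}$ equals the required constant. This establishes that the Strominger system~\eqref{sup1}, \eqref{acgen} holds with the stated data.

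Third, I would settle the metric and topological claims and the equations of motion. For completeness I would introduce a logarithmic radial coordinate $t$ near the singularity $x=e$, exactly as in~\cite{CHS} and in the proof of Theorem~\ref{t:main2}, in which the horizontal part of the metric becomes $\bar g_H=dt^2+3\alpha'\,ds^2_3$ with $ds^2_3$ the round metric on $S^3$; this is complete, and since $\bar g=\bar g_H+(e^5)^2$ the five dimensional metric is complete as well. As $H(2,1)$ is a $2$-step nilpotent, hence simply connected, Lie group (diffeomorphic to $\mathbb{R}^5$), the resulting manifold is simply connected and non-compact. Finally, for the heterotic equations of motion I would argue as in Theorem~\ref{t:cpmct strominger}: since the anomaly cancellation uses the first Pontrjagin form of the $(-)$-connection, the identity~\eqref{dtr} together with~\eqref{acgen} and the first equation of~\eqref{sup1} force $R^-$ to satisfy the $SU(2)$ instanton condition~\eqref{25} up to the first order in $\alpha'$, whence \cite{Iv0} gives that~\eqref{mot} holds up to the first order in $\alpha'$.

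The one genuinely delicate point I expect is the completeness assertion at the singular locus $x=e$: one must check that the conformal blow-up $e^{2f}\,g|_{\mathbb H}$, after the logarithmic substitution, really produces a complete cylindrical-type end rather than a metrically incomplete cusp, and that adding the fibre direction $(e^5)^2$ does not destroy this. Everything else is either a line-by-line transcription of the seven dimensional formulas under the contraction of Subsection~\ref{ss:5-D} or a direct appeal to~\cite{FIUVas} and~\cite{Iv0}; a minor bookkeeping matter is to confirm that the almost contact data underlying the limiting $SU(2)$ structure are exactly those for which~\eqref{25} is the instanton condition used in Lemma~\ref{nablamin-inst-general 5D}.
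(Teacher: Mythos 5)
Your proposal is correct and follows essentially the same route as the paper: the paper obtains Theorem~\ref{t:main25} precisely by contracting the seven-dimensional construction of Theorem~\ref{t:main2} (the $SU(2)$ structure solving the first two Killing spinor equations with flux \eqref{tor5f}, Lemma~\ref{nablamin-inst-general 5D} with $A=O$ giving the instanton condition $\triangle e^{2f}=0$, the Pontrjagin-form difference reducing the anomaly to $\triangle e^{-2f}=\mathrm{const}/\alpha'$ solved by \eqref{fundsol}, completeness via the logarithmic radial coordinate as in \cite{CHS}, and the equations of motion from $R^-$ being an instanton to first order in $\alpha'$ via \cite{Iv0}). Your write-up merely makes explicit the transcription that the paper leaves implicit, so no further comment is needed.
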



\begin{thebibliography}{99}
\bibitem{Ahl} {\ L.V. Ahlfors, Complex analysis: An introduction of the
theory of analytic functions of one complex variable, 
McGraw-Hill Book Co., New York-Toronto-London 1966. }

\bibitem{Sharp} L. B. Anderson, J. Gray, E. Sharpe, \emph{Algebroids,
Heterotic Moduli Spaces and the Strominger System}, arXiv:1402.1532 [hep-th]

\bibitem{AG1} {\ B. Andreas, M. Garc\'{\i}a-Fern\'andez, \emph{Heterotic
non-K\"ahler geometries via polystable bundles on Calabi-Yau threefolds}, J.
Geom. Phys. \textbf{62} (2012), 183--188.}

\bibitem{AG2} {\ B. Andreas, M. Garc\'{\i}a-Fern\'andez, \emph{Solutions of
the Strominger system via stable bundles on Calabi-Yau threefolds}, Commun.
Math. Phys. \textbf{332} (2014), 1381--1383. }

\bibitem{AG3} {\ B. Andreas, M. Garc\'{\i}a-Fern\'andez, \emph{Note on solutions of the Strominger system from unitary
representations of cocompact lattices of $SL(2, \mathbb{C})$}, Commun. Math.
Phys.  \textbf{315} (2012), 153--168. }



\bibitem{BBDG} {\ K. Becker, M. Becker, K. Dasgupta, P.S. Green, \emph{%
Compactifications of heterotic theory on non-K\"ahler complex manifolds: I},
JHEP \textbf{0304} (2003), 007. }

\bibitem{BBE} {\ K. Becker, M. Becker, K. Dasgupta, P.S. Green, E. Sharpe,
\emph{Compactifications of heterotic strings on non-K\"ahler complex
manifolds: II}, Nuclear Phys. \textbf{B 678} (2004), 19--100. }

\bibitem{BBDP} {\ K. Becker, M. Becker, K. Dasgupta, S. Prokushkin, \emph{%
Properties of heterotic vacua from superpotentials}, Nuclear Phys. \textbf{B
666} (2003), 
144--174. } 

\bibitem{y4} {\ K. Becker, M. Becker, J.-X. Fu, L.-S. Tseng, S.-T. Yau,
\emph{Anomaly cancellation and smooth non-K\"ahler solutions in heterotic
string theory}, Nuclear Phys. \textbf{B 751} (2006), 108--128. }

\bibitem{BBCG} {\ K. Becker, C. Bertinato, Y.-C. Chung, G. Guo, \emph{%
Supersymmetry breaking, heterotic strings and fluxes}, Nuclear Phys. \textbf{%
B 823} (2009), 428--447. } 

\bibitem{BSethi} {\ K. Becker, S. Sethi, \emph{Torsional heterotic geometries%
}, Nuclear Phys. \textbf{B 820} (2009), 1--31. } 

\bibitem{BBW} I. Bena, N. Bobev, N. Warner, \emph{Bubles on manifolds with
U(1) isometry}, JHEP \textbf{0708} (2007), 004.

\bibitem{Berg} {\ E.A. Bergshoeff, M. de Roo, \emph{The quartic effective
action of the heterotic string and supersymmetry}, Nuclear Phys. \textbf{B
328} (1989), 439--468. }


\bibitem{Bis} {\ I. Biswas, A. Mukherjee, \emph{Solutions of Strominger
system from unitary representations of cocompact lattices of} SL(2,$\mathbb{C%
}$), Commun. Math. Phys. \textbf{322} (2013), 
373--384. } 

\bibitem{CHS} {\ C.G. Callan, J.A. Harvey, A. Strominger, \emph{Worldsheet
approach to heterotic instantons and solitons}, Nuclear Phys. \textbf{B 359}
(1991), 
611--634. }



\bibitem{Car1} {\ G.L. Cardoso, G. Curio, G. Dall'Agata, D. L\"ust, \emph{%
BPS action and superpotential for heterotic string compactifications with
fluxes}, JHEP \textbf{0310} (2003), 004. }

\bibitem{CCDLMZ} {\ G.L. Cardoso, G. Curio, G. Dall'Agata, D. L\"ust, P.
Manousselis, G. Zoupanos, \emph{Non-K\"ahler string backgrounds and their
five torsion classes}, Nuclear Phys. \textbf{B 652} (2003), 5--34. }


\bibitem{ConS} D. Conti, S. Salamon, \emph{Generalized Killing spinors in
dimension $5$}, Trans. Amer. Math. Soc. \textbf{359} (2007), 5319--5343.


\bibitem{DFG} {\ K. Dasgupta, H. Firouzjahi, R. Gwyn, \emph{On the warped
heterotic axion}, JHEP \textbf{0806} (2008), 056. }

\bibitem{Bwit} {\ B. de Wit, D.J. Smit, N.D. Hari Dass, \emph{Residual
supersymmetry of compactified D=10 supergravity}, Nuclear Phys. \textbf{B 283%
} (1987), 165--191. }



\bibitem{Erd} {\ A. Erd\'elyi, W. Magnus, F. Oberhettinger, F.G. Tricomi,
Higher transcendental functions. Vol. I, II and III,
Robert E. Krieger Publishing Co., Inc., Melbourne, Fla., 1981. }

\bibitem{FYau}  Teng Fei, Shing-Tung Yau, \emph{Invariant Solutions to the
Strominger System on Complex Lie Groups and Their Quotients},
arXiv:1407.7641.

\bibitem{FIUVdim5} {\ M. Fern\'{a}ndez, S. Ivanov, L. Ugarte, R. Villacampa,
\emph{Compact supersymmetric solutions of the heterotic equations of motion
in dimension 5}, Nuclear Phys. \textbf{B 820} (2009), 483--502. }

\bibitem{FIUV} {\ M. Fern\'andez, S. Ivanov, L. Ugarte, R. Villacampa, \emph{%
Non-K\"ahler heterotic-string compactifications with non-zero fluxes and
constant dilaton}, Commun. Math. Phys. \textbf{288} (2009), 677--697. }

\bibitem{FIUVdim7-8} {\ M. Fern\'{a}ndez, S. Ivanov, L. Ugarte, R.
Villacampa, \emph{Compact supersymmetric solutions of the heterotic
equations of motion in dimensions 7 and 8}, Adv. Theor. Math. Phys. \textbf{%
15} (2011), 245--284. }

\bibitem{FIUVas} {\ M. Fern\'andez, S. Ivanov, L. Ugarte, D. Vassilev, \emph{%
Non-K\"ahler heterotic string solutions with non-zero fluxes and
non-constant dilaton}, 
JHEP \textbf{1406} (2014), 073.}


\bibitem{FI1} Th. Friedrich, S. Ivanov, \emph{Parallel spinors and
connections with skew-symmetric torsion in string theory}, Asian J. Math.
\textbf{6} (2002), 303--336.

\bibitem{FI2} Th. Friedrich, S. Ivanov, \emph{Killing spinor equations in
dimension 7 and geometry of integrable $G_2$ manifolds}, J. Geom. Phys.,
\textbf{48} (2003), 1--11.

\bibitem{FLY} {\ J.-X. Fu, L.-S. Tseng, S.-T. Yau, \emph{Local heterotic
torsional models}, Commun. Math. Phys. \textbf{289} (2009), 1151--1169. }


\bibitem{y3} {\ J.-X. Fu, S.-T. Yau, \emph{The theory of superstring with
flux on non-K\"ahler manifolds and the complex Monge-Amp\`ere equation}, J.
Diff. Geom. \textbf{78} (2008), 369--428. }

\bibitem{GGMPR} J.P. Gauntlett, J. Gutowski, C. Hull, S. Pakis, H. Reall,
\emph{All supersymmetric solutions of minimal supergravity in five dimensions%
}, Class. Quantum Grav. \textbf{20} (2003), 4587--4634.

\bibitem{GKMW} {\ J.P. Gauntlett, N. Kim, D. Martelli, D. Waldram, \emph{%
Fivebranes wrapped on SLAG three-cycles and related geometry}, JHEP \textbf{%
0111} (2001), 018. }

\bibitem{GMPW} {\ J.P. Gauntlett, D. Martelli, S. Pakis, D. Waldram, \emph{$%
G $-structures and wrapped NS5-branes}, Commun. Math. Phys. \textbf{247}
(2004), 421--445. }

\bibitem{GMW} {\ J.P. Gauntlett, D. Martelli, D. Waldram, \emph{Superstrings
with intrinsic torsion}, Phys. Rev. \textbf{D 69} (2004), 086002, 27 pp. }

\bibitem{GKP} {\ S.B. Giddings, S. Kachru, J. Polchinski, \emph{Hierarchies
from fluxes in string compactifications}, Phys. Rev. \textbf{D 66} (2002),
106006, 16 pp. } 

\bibitem{GPap} {\ J. Gillard, G. Papadopoulos, D. Tsimpis, \emph{Anomaly,
fluxes and $(2,0)$ heterotic-string compactifications}, JHEP \textbf{0306}
(2003), 035. }

\bibitem{GP} {\ E. Goldstein, S. Prokushkin, \emph{Geometric model for
complex non-K\"ahler manifolds with $SU(3)$ structure}, Commun. Math. Phys.
\textbf{251} (2004), 65--78. }

\bibitem{GLP} {\ U. Gran, P. Lohrmann, G. Papadopoulos, \emph{The spinorial
geometry of supersymmetric heterotic string backgrounds}, JHEP \textbf{0602}
(2006), 063. }

\bibitem{GPRS} {\ U. Gran, G. Papadopoulos, D. Roest, P. Sloane, \emph{%
Geometry of all supersymmetric type I backgrounds}, JHEP \textbf{0708}
(2007), 074. }

\bibitem{GPR} {\ U. Gran, G. Papadopoulos, D. Roest, \emph{Supersymmetric
heterotic string backgrounds}, Phys. Lett. \textbf{B 656} (2007), 119--126. }

\bibitem{P} {\ U. Gran, G. Papadopoulos, \emph{Solution of heterotic Killing
spinor equations and special geometry}, Special metrics and supersymmetry,
144--161, AIP Conf. Proc., 1093, Amer. Inst. Phys., Melville, NY, 2009. }

\bibitem{GNic} M. G\"unaydin, H. Nikolai, \emph{Seven-dimensional octonionic
Yang-Mills instanton and its extension to an heterotic string soliton},
Phys. Lett. B \textbf{353} (1991), 169.



\bibitem{HP1} {\ P.S. Howe, G. Papadopoulos, \emph{Ultraviolet behavior of
two-dimensional supersymmetric non-linear sigma models}, Nuclear Phys.
\textbf{B 289} (1987), 264--276. }

\bibitem{Hu86} {\ C.M. Hull, \emph{Compactifications of the heterotic
superstring}, Phys. Lett. \textbf{B 178} (1986), 357--364. }

\bibitem{Hull} {\ C.M. Hull, \emph{Anomalies, ambiguities and superstrings},
Phys. Lett. \textbf{B 167} (1986), 51--55. }

\bibitem{HT} {\ C.M. Hull, P.K. Townsend, \emph{The two loop beta function
for sigma models with torsion}, Phys. Lett. \textbf{B 191} (1987), 115--121.
}

\bibitem{HuW} {\ C.M. Hull, E. Witten, \emph{Supersymmetric sigma models and
the heterotic string}, Phys. Lett. \textbf{B 160} (1985), 398--402. }

\bibitem{IMY} {\ H. Imazato, S. Mizoguchi, M. Yata, \emph{Taub-NUT crystal},
Int. J. Mod. Phys. A \textbf{26} (2011), 5143--5169. }


\bibitem{II} {\ P. Ivanov, S. Ivanov, \emph{$SU(3)$-instantons and $%
G_2,Spin(7) $-Heterotic string solitons}, Commun. Math. Phys. \textbf{259}
(2005), 79--102. }

\bibitem{Iv0} {\ S. Ivanov, \emph{Heterotic supersymmetry, anomaly
cancellation and equations of motion}, Phys. Lett. \textbf{B 685} (2010),
190--196. } 


\bibitem{IP1} {\ S. Ivanov, G. Papadopoulos, \emph{Vanishing theorems and
string backgrounds}, Class. Quantum Grav. \textbf{18} (2001), 1089--1110. }

\bibitem{IP2} {\ S. Ivanov, G. Papadopoulos, \emph{A no-go theorem for
string warped compactifications}, Phys. Lett. \textbf{B 497} (2001),
309--316. }



\bibitem{KY} {\ T. Kimura, P. Yi, \emph{Comments on heterotic flux
compactifications}, JHEP \textbf{0607} (2006), 030. }

\bibitem{KM} {\ T. Kimura, S. Mizoguchi, \emph{Chiral generations on
intersecting 5-branes in heterotic string theory }, JHEP \textbf{1004}
(2010), 028.} 

\bibitem{y1} {\ J. Li, S.-T. Yau, \emph{The existence of supersymmetric
string theory with torsion}, J. Diff. Geom. \textbf{70} 
(2005), 143--181. }

\bibitem{LV-P} H. Lu, J.F. Vazquez-Poritz, \emph{Resolution of overlaping
branes}, Phys. Lett. \textbf{B 534} (2002), 155.

\bibitem{MS} {\ D. Martelli, J. Sparks, \emph{Non-K\"ahler heterotic
rotations}, Adv. Theor. Math. Phys. \textbf{15} (2011), 131--174. }

\bibitem{MSeth}  Travis Maxfield, Savdeep Sethi, \emph{Domain Walls, Triples
and Acceleration}, arXiv:1404.2564.


\bibitem{Sethi} I. V. Melnikov, R. Minasian, S. Sethi, \emph{Heterotic
fluxes and supersymmetry}, arXiv:1403.4298 [hep-th].

\bibitem{MY} {\ S. Mizoguchi, M. Yata, \emph{Family unification via
quasi-Nambu-Goldstone fermions in string theory}, Prog. Theor. Exp. Phys.
(2013), 053B01. } 

\bibitem{Ossa} X. de la Ossa, E.E. Svanes, \emph{Holomorphic Bundles and
the Moduli Space of N=1 Heterotic Compactifications}, arXiv:1402.1725
[hep-th].

\bibitem{Ossa1}  Xenia de la Ossa, Eirik Eik Svanes, \emph{Connections,
Field Redefinitions and Heterotic Supergravity}, arXiv:1409.3347.

\bibitem{Pap} {\ G. Papadopoulos, \emph{New half supersymmetric solutions of
the heterotic string}, Class. Quantum Grav. \textbf{26} (2009) 135001, 26 pp.%
} 

\bibitem{Sen} {\ A. Sen, \emph{$(2,0)$ supersymmetry and space-time
supersymmetry in the heterotic string theory}, Nuclear Phys. \textbf{B 278}
(1986), 289--308. }


\bibitem{SM} S. Stotyn, R. Mann, \emph{Supergravity on an Atiyah-Hitchin base%
}, JHEP \textbf{0806} (2008), 087.

\bibitem{Str1} {\ A. Strominger, \emph{Heterotic solitons}, Nuclear Phys.
\textbf{B 343} (1990) 167--184. [Erratum-ibid. \textbf{B 353} (1991) 565.] }

\bibitem{Str} {\ A. Strominger, \emph{Superstrings with torsion}, Nuclear
Phys. \textbf{B 274} (1986), 253--284. }



\bibitem{UV2} {\ L. Ugarte, R. Villacampa, \emph{Balanced Hermitian geometry
on 6-dimensional nilmanifolds}, Forum Math. (to appear), arXiv:1104.5524v2
[math.DG]. }











\end{thebibliography}
\end{document}